\newtheorem{theorem}{Theorem}[section]
\newtheorem{proposition}[theorem]{Proposition}
\theoremstyle{definition}
\newtheorem{definition}[theorem]{Definition}
\newtheorem{remark}[theorem]{Remark}
\newcommand{\T}{\mathbb{T}}
\newcommand{\R}{\mathbb{R}}
\newcommand{\Z}{\mathbb{Z}}
\newcommand{\C}{\mathbb{C}}
\newcommand{\set}[2]{\{#1 : #2 \}}
\DeclareMathOperator*{\slim}{s-lim}
\begin{document} 

\title[Fourier decay in two dimensions]{Fourier transform of surface--carried measures of two-dimensional generic surfaces and applications}
\keywords{Oscillatory integrals, dispersive and Strichartz estimates, global well-posedness, spectral theory}
\subjclass[2020]{42B20, 35R02, 81Q05, 39A12, 35L05}
 \author[J.-C.\ Cuenin]{Jean-Claude Cuenin}
 \address[J.-C.\ Cuenin]{Department of Mathematical Sciences, Loughborough University, Loughborough,
 Leicestershire, LE11 3TU United Kingdom}
 \email{J.Cuenin@lboro.ac.uk}
 \author[R. Schippa]{Robert Schippa}
\address[R. Schippa]{Karlsruher Institut f\"ur Technologie, Fakult\"at f\"ur Mathematik, Institut f\"ur Analysis, Englerstrasse 2, 76131 Karlsruhe, Germany}
\email{robert.schippa@kit.edu}

\begin{abstract}
We give a simple proof of the sharp decay of the Fourier-transform of surface-carried measures of two-dimensional generic surfaces. The estimates are applied to prove Strichartz and resolvent estimates for elliptic operators whose characteristic surfaces satisfy the generic assumptions. We also obtain new results on the spectral and scattering theory of discrete Schr\"odinger operators on the cubic lattice. 
\end{abstract}

\maketitle

\section{Introduction and main results}

The purpose of this note is to elaborate on the decay of the Fourier transform of compactly supported surface-carried measures in the two-dimensional smooth case. We aim to cover the generic behavior. The starting point is a celebrated theorem of Whitney \cite{MR0073980}, which implies that fold and cusp singularities of the Gauss map (see Definition 2.1) are the only generic singularities. Recall that in the regular case the Van der Corput lemma describes the sharp decay: Let $\Phi \in C^\infty(\R^2 ; \R)$ and $a \in C^\infty_c(\R^2)$ with $D^2 \Phi$ non-degenerate within the support of $a$, and $0$ is the only point in the support of $a$ such that $\nabla \Phi = 0$. Then,
\begin{equation}
\label{eq:VanderCorputIntroduction}
|\int_{\R^2} e^{i \lambda \Phi(x)} a(x) dx | \leq C \lambda^{-1}
\end{equation}
for $\lambda \geq 1$ with implicit constant depending only on lower eigenvalue bounds of $D^2 \Phi$, $\| \Phi \|_{C^N}$, and $\text{supp} (a)$, $\| a \|_{C^N}$ for some $N$.
We refer to Alazard--Burq--Zuily \cite{MR3637937} for a precise dependence of the implicit constant, also in higher dimensions, and to Oh--Lee \cite{OhLee2020} for further improvements.

This oscillatory integral estimate settles the decay of the Fourier transform of surface-carried measures for compact surfaces with non-degenerate Gaussian curvature like the sphere. Another classical consequence regarding the role of principal curvature is due to Littman \cite{MR0155146} (see also Greenleaf \cite{MR620265}): Let $d \mu = \beta d\sigma$ denote a surface measure $d\sigma$ of a smooth surface $\Sigma$ with compactly supported smooth density $\beta$. Littman showed that if $\Sigma$ has at least $k \geq 1$ principal curvatures bounded from below, the Fourier transform of $\mu$ satisfies the decay behavior
\begin{equation}
\label{eq:DecayFourierTransform}
|\mu^\vee(x)| = | \int_{\Sigma} e^{i x. \xi} \beta(\xi) d\sigma(\xi) | \lesssim (1+|x|)^{-\frac{k}{2}}
\end{equation}
with implicit constant depending on the lower bounds of the non-vanishing principal curvatures in modulus and $\beta$. 
However, this bound is rarely optimal except the surface is completely flat in the vanishing curvature direction. The generic sharp decay rate in \eqref{eq:DecayFourierTransform} is $k = \frac{3}{2}$. This can be proved using tools of singularity theory, see e.g.\ Duistermaat \cite{MR405513}, Varchenko \cite{Varchenko1976}, or the monograph \cite{MR2896292} of Arnold--Guse{\u\i}n-Zade--Varchenko. These methods do not always give  uniform bounds as in \eqref{eq:DecayFourierTransform}; stability results leading to uniform bounds were proved by Karpushkin \cite{MR778884}. Ikromov--M\"uller \cite{MR2854839} proved stability results under linear perturbations, which would suffice here. In fact, there is a classification of hypersurfaces in $\R^3$ in terms of the so-called linear height of the Newton polygon corresponding to the graph function defining the surface locally \cite{MR3524103}. By an application of Greenleaf's result \cite[Theorem 1]{MR620265}, the decay estimate \eqref{eq:DecayFourierTransform} implies $L^p\to L^2$ Fourier restriction bounds. However, sharp decay does not always imply sharp (in terms of optimality of $p$) restriction bounds; this is only the case if the coordinates are 'adapted'. We refer to \cite{MR2653054,MR2775788} and especially \cite{MR3524103} for a more in-depth discussion.
Our aim in this paper is to give an elementary proof of \eqref{eq:DecayFourierTransform} with $k = \frac{3}{2}$ in the generic two-dimensional case.

In the context of applications to Strichartz estimates, we also refer to decay estimates due to Ben-Artzi--Koch--Saut \cite{MR2015408}. In the work \cite{MR2015408}, after reducing to normal forms for cubic dispersion relations, the decay was made precise by making use of special functions. Our proof only makes use of one-dimensional versions of the Van der Corput lemma and is not restricted to cubic dispersion relations. Recently, Palle \cite{MR4229613}, extending the results of Ikromov--M\"uller, obtained mixed norm Strichartz estimates for general hypersurfaces in three dimensions, not only generic ones. 

In the generic case, Erd\H{o}s--Salmhofer \cite{MR2324803} derived the sharp decay of the Fourier transform of the surface measure up to logarithmic factors; however, the decay is not uniform in all directions. This result was applied to improve on the four-denominator estimate for the discrete Schr\"odinger operator on $\Z^3$ (see e.g.\ \cite{MR2333778}). Taira \cite{Taira2020B} obtained the sharp decay and proved uniform resolvent estimates for this operator. His proof uses the results of Ikromov--M\"uller and consists in evaluating certain Newton polyhedra. In Subsection \ref{Subsection:discrete} we give a simple proof of the sharp decay that only relies on (part of the) calculations already contained in the paper of Erd\H{o}s--Salmhofer. 

The surfaces under consideration $\Sigma \in \mathcal{C}$ can be locally parametrized as graphs $\{(u,v,h(u,v)): \; (u,v) \in B(0,\varepsilon) \}$ with $h(u,v) = v^2 \pm u^k + f(u,v)$ with $k=2,3,4$ and $f(u,v) = \mathcal{O}(|v|^3 + |v|^2 |u| + |u|^{k+1})$. Furthermore,
\begin{equation*}
|\partial^\alpha f(u,v)| \leq C_\alpha \min(\varepsilon^{3-|\alpha|},1).
\end{equation*}
Note that the Hessian of $h$ is degenerate at the origin if $k>2$; one says that $h$ has $A_{k-1}$ singularities. In codimension $3$ the $A_1,A_2,A_3$ singularities are the only generic ones (see \cite{MR2896292} for details). In Section \ref{section:Preliminaries} we will give a coordinate-free definition of the class $\mathcal{C}$.

We consider the surface $\{ (u,v,h(u,v)): \, (u,v) \in B(0,\varepsilon) \}$ with Fourier transform of the surface-carried measure given by
\begin{equation*}
\mu^\vee (x) = \int e^{i (x_1 u + x_2 v + x_3 h(u,v))} a(u,v) \sqrt{1+|\nabla h(u,v)|^2} du dv
\end{equation*}
with $a \in C^\infty_c(B(0,\varepsilon))$. We write $\beta(u,v) = a(u,v) \sqrt{1+|\nabla h(u,v)|^2}$.

This leads us to the analysis of oscillatory integrals
\begin{equation*}
I_{\Phi}(\lambda) = \int e^{i \lambda \Phi(u,v)} \beta(u,v) du dv
\end{equation*}
with $\beta \in C^\infty_c(B(0,\varepsilon))$ and $\Phi: \R^2 \to \R$ being a linear perturbation of $h(u,v)$.

In the first step, we give a simple proof of the decay of $I_\Phi$ only relying on variants of the one-dimensional Van der Corput lemma. The decay is essentially well-known in the literature; see, e.g., Karpushkin \cite{MR778884}. 

\begin{theorem}
\label{thm:OscillatoryIntegralEstimate}
Let $0<\varepsilon\ll 1$, $\lambda \geq 1$, $\beta: \R^2 \to \R$, and $h:\R^2 \to \R$ as above. Let
\begin{equation*}
I_{\Phi}(\lambda) = \int e^{i \lambda \Phi(u,v)} \beta(u,v) du dv, \quad \Phi(u,v) = \big( \frac{x_1}{\lambda} u + \frac{x_2}{\lambda} v + h(u,v) \big).
\end{equation*}

Then, we find the following estimate to hold
\begin{equation}
\label{eq:Decay}
|I_{\Phi}(\lambda)| \leq C \lambda^{-\frac{1}{2}-\frac{1}{k}}
\end{equation}
with $C$ uniform in $|x_i| \leq 10 \varepsilon \lambda$ and depending only on $\text{supp}(\beta)$ and $\| \beta \|_{C^N}$ for fixed $N$.
\end{theorem}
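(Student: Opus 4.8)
The plan is to integrate first in the elliptic variable $v$, where the phase is non-degenerate and can be reduced to $w^2$ by a Morse-lemma change of variables, and then to extract the remaining $\lambda^{-1/k}$ from the oscillation in $u$ by the van der Corput lemma of order $k$.

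\textbf{Step 1: reduction in $v$.} Since $\partial_v^2\Phi=\partial_v^2 h=2+\partial_v^2 f=2+\mathcal{O}(\varepsilon)$ (using $|\partial^\alpha f|\le C_\alpha\min(\varepsilon^{3-|\alpha|},1)$ with $|\alpha|=2$), for each fixed $u$ the map $v\mapsto\Phi(u,v)$ is strictly convex on a fixed neighbourhood of $\operatorname{supp}\beta$; because $|x_2/\lambda|\le 10\varepsilon\ll1$ its critical point $v_*(u)$, solving $\partial_v\Phi(u,v_*(u))=0$, exists, has $|v_*(u)|\lesssim\varepsilon$, and is smooth in $u$ with $v_*'(u)=-\partial_u\partial_v f/\partial_v^2\Phi=\mathcal{O}(\varepsilon)$ and all higher $u$-derivatives bounded, uniformly in $x_1,x_2,\lambda$. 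Writing $\Phi(u,v)=H(u)+Q(u,v)(v-v_*(u))^2$ with $H(u):=\Phi(u,v_*(u))$ and $Q=1+\mathcal{O}(\varepsilon)>0$, I would substitute $w=(v-v_*(u))\sqrt{Q(u,v)}$, a smooth change of variables on a fixed neighbourhood of $\operatorname{supp}\beta$, obtaining
\[
I_\Phi(\lambda)=\int e^{i\lambda H(u)}\Big(\int e^{i\lambda w^2}\,b(u,w)\,dw\Big)\,du,
\]
where $b(u,w)=\beta(u,\phi(u,w))\,\partial_w\phi(u,w)$ is smooth, supported in $|u|,|w|\lesssim\varepsilon$, with $C^N$-norms controlled by $\operatorname{supp}\beta$ and $\|\beta\|_{C^N}$ only.

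\textbf{Step 2: the inner one-dimensional integral.} For this I would use only the one-dimensional van der Corput lemma, not a full stationary-phase expansion (so as to avoid worrying whether $0$ lies in $\operatorname{supp}_w b(u,\cdot)$). Fix $\psi\in C_c^\infty(\R)$ with $\psi\equiv1$ on $[-1,1]$ and decompose $b(u,w)=b(u,0)\psi(w)+w\,\widetilde b(u,w)$, with $\widetilde b$ smooth and $C^N$-controlled. Combining the Fresnel identity $\int_\R e^{i\lambda w^2}\,dw=\sqrt{\pi/\lambda}\,e^{i\pi/4}$, the fact that $\psi-1$ vanishes near $0$ (so $\int e^{i\lambda w^2}(\psi-1)\,dw=\mathcal{O}(\lambda^{-1})$ by integrating by parts away from the origin), and the identity $w\,e^{i\lambda w^2}=(2i\lambda)^{-1}\partial_w e^{i\lambda w^2}$ followed by one integration by parts and the order-two van der Corput lemma, one gets
\[
\int e^{i\lambda w^2}\,b(u,w)\,dw=\sqrt{\pi/\lambda}\,e^{i\pi/4}\,b(u,0)+R(u,\lambda),\qquad |R(u,\lambda)|\le C\lambda^{-1},
\]
uniformly in $u$ and in the parameters. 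Hence $I_\Phi(\lambda)=\sqrt{\pi}\,e^{i\pi/4}\lambda^{-1/2}\!\int e^{i\lambda H(u)}b(u,0)\,du+\mathcal{O}(\lambda^{-1})$, the error being harmless since $\tfrac12+\tfrac1k\le1$.

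\textbf{Step 3: the outer integral.} It remains to show $\big|\int e^{i\lambda H(u)}b(u,0)\,du\big|\le C\lambda^{-1/k}$, which follows from the van der Corput lemma of order $k$ (the amplitude $b(\cdot,0)$ is $C^1_c$ with controlled norm) once one proves $|H^{(k)}(u)|\ge k!/2$ for $|u|\lesssim\varepsilon$. By the envelope identity $H'(u)=\partial_u\Phi(u,v_*(u))$, so $H^{(k)}(u)=\pm k!+\tfrac{d^{k-1}}{du^{k-1}}\big[\partial_u f(u,v_*(u))\big]$, and by Faà di Bruno every term of the last expression is $\partial_u^a\partial_v^b f(u,v_*(u))$ times $b$ factors $v_*^{(l_i)}$ with $l_i\ge1$, $\sum l_i=k-a$, $a\ge1$. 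Reading off the Newton polygon of $f$ (the monomials $u^av^b$ in its Taylor expansion satisfy $a+b\ge3$ and $2a+kb\ge2k+2$), the derivatives $\partial_u^k f$, $\partial_u\partial_v f$, and every $\partial_u^a\partial_v^b f$ with $(a,b)$ strictly below that polygon vanish at the origin, hence are $\mathcal{O}(\varepsilon)$ on $B(0,\varepsilon)$ by $|\partial^\alpha f|\le C_\alpha$ for $|\alpha|=k+1$; combined with $v_*'=\mathcal{O}(\varepsilon)$, and using $k\le4$ to handle the factors $v_*^{(l)}$ with $l\ge2$ (only bounded, but then the accompanying derivative of $f$ is forced below the polygon), a short case analysis shows each term is $\mathcal{O}(\varepsilon)$, so $H^{(k)}(u)=\pm k!+\mathcal{O}(\varepsilon)$. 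Combining Steps 2 and 3 gives $|I_\Phi(\lambda)|\le C(\lambda^{-1/2-1/k}+\lambda^{-1})\le C\lambda^{-1/2-1/k}$.

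I expect Step 3 — the estimate $H^{(k)}(u)=\pm k!+\mathcal{O}(\varepsilon)$ — to be the main obstacle: it is the only place where both the qualitative vanishing encoded in $f(u,v)=\mathcal{O}(|v|^3+|v|^2|u|+|u|^{k+1})$ and the quantitative derivative bounds $|\partial^\alpha f|\le C_\alpha\min(\varepsilon^{3-|\alpha|},1)$ are genuinely needed, and the bookkeeping of which mixed derivatives of $f$ vanish at the origin (so as to defeat the merely bounded high-order derivatives of $v_*$) requires care; everything else is routine one-dimensional oscillatory-integral analysis, with the constraint $|x_2|\le 10\varepsilon\lambda$ used precisely to keep $v_*$ inside the region where the Morse reduction is valid.
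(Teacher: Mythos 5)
Your proof is correct and follows essentially the same strategy as the paper's: integrate out the non-degenerate variable $v$ first to extract the $\lambda^{-1/2}$ factor (the paper cites Sogge's variable-coefficient van der Corput lemma, while you reprove the needed special case directly via a Morse-lemma reduction and the Fresnel integral), then apply the order-$k$ van der Corput lemma in $u$ along the critical curve $v=v_*(u)$. Your Step 3, supplying the Fa\`a di Bruno and Newton-polygon bookkeeping that yields $H^{(k)}(u)=\pm k!+\mathcal{O}(\varepsilon)$, fills in a step the paper's proof glosses over with the phrase ``by assumption''.
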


As consequence of the established decay, we can show Strichartz estimates for homogeneous dispersion relations with characteristic surface in the class $\mathcal{C}$, i.e., in the generic case. However, in the homogeneous case, cusps cannot occur: The sets with vanishing curvature become straight lines emanating from the origin and the homogeneity would stretch out the cusp along the whole line; but cusps are isolated in the generic case (see \cite{MR0073980}). We have the following:
\begin{theorem}
\label{thm:GenericStrichartzEstimates}
Let $\mu > 0$ and $p: \R^2 \backslash 0 \to \R$ be a $\mu$-homogeneous smooth function, i.e., $p(\lambda \xi) = \lambda^\mu p(\xi)$ for $\lambda > 0$. Suppose that $\{ (\xi, p(\xi)): \, \frac{1}{2} < |\xi| < 2 \} \in \mathcal{C}$. Let $u$ solve the linear dispersive equation
\begin{equation}
\label{eq:LinearDispersiveEquation}
\left\{ \begin{array}{cl}
i \partial_t u + p(\nabla / i) u &= 0, \quad (t,x) \in \R \times \R^2, \\
u(0) &= u_0 \in \mathcal{S}'(\R^2).
\end{array} \right.
\end{equation} 
Then, we find the following estimate to hold:
\begin{equation}
\label{eq:StrichartzEstimates}
\| u \|_{L^p(\R;L^q(\R^2))} \lesssim \| |D|^s u_0 \|_{L^2(\R^2)}
\end{equation}
provided that $p,q \geq 2$, $\frac{1}{p} + \frac{5}{6q} \leq \frac{5}{12}$, and $s = 1 - \frac{2}{q} - \frac{\mu}{p}$.
\end{theorem}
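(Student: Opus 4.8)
The plan is to derive the Strichartz estimates from the dispersive decay of Theorem \ref{thm:OscillatoryIntegralEstimate} via the standard $TT^*$ machinery, combined with a Littlewood--Paley decomposition exploiting the homogeneity of $p$. First I would reduce to a frequency-localized statement: let $P_N$ denote a Littlewood--Paley projection to frequencies $|\xi|\sim N$, and set $u_N = e^{itp(\nabla/i)}P_N u_0$. By $\mu$-homogeneity, a rescaling $\xi\mapsto N\xi$, $x\mapsto Nx$, $t\mapsto N^{\mu}t$ maps the evolution at frequency $N$ to the evolution at frequency $\sim 1$, so it suffices to prove the fixed-frequency estimate $\|P_1 e^{itp(\nabla/i)}u_0\|_{L^p_tL^q_x}\lesssim \|u_0\|_{L^2}$ and then sum over dyadic $N$ using the relation $s = 1 - \frac{2}{q} - \frac{\mu}{p}$, which is exactly the scaling exponent that makes the dyadic pieces summable (with the $L^p_t\ell^2_N$ square-function estimate supplied by the Littlewood--Paley theorem once $p\ge 2$, $q\ge 2$).

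For the fixed-frequency piece, the kernel of $e^{itp(\nabla/i)}P_1(P_1)^*e^{-isp(\nabla/i)}$ is
\[
K(t-s,x-y) = \int e^{i((x-y)\cdot\xi + (t-s)p(\xi))}\,\chi(\xi)^2\, d\xi,
\]
where $\chi$ is a bump adapted to $|\xi|\sim 1$. The hypothesis $\{(\xi,p(\xi)):\tfrac12<|\xi|<2\}\in\mathcal C$ means that, after a partition of unity and affine changes of variables, each piece of this integral is of the form $I_\Phi(\lambda)$ with $\lambda\sim |(x-y,t-s)|$ and $\Phi$ a linear perturbation of a normal form $v^2\pm u^k+f$; away from the degenerate directions the phase is elliptic or has a nondegenerate Hessian, giving $|K|\lesssim |t-s|^{-1}$ by stationary phase. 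Theorem \ref{thm:OscillatoryIntegralEstimate} then yields the uniform bound $|K(t-s,x-y)|\lesssim (1+|t-s|)^{-(1/2+1/k)}$ with $k\le 4$, hence the worst decay rate is $|t-s|^{-3/4}$ (attained for $k=4$, the $A_3$ cusp — though as the remark before the theorem notes, in the homogeneous case cusps are excluded, so genuinely only $k=2,3$ occur and one even gets $|t-s|^{-5/6}$; I would state the argument for the general exponent $-(1/2+1/k)$ and specialize). Combined with the trivial $L^2\to L^2$ bound $\|K(t,\cdot)*\|_{2\to 2}\lesssim 1$, interpolation gives the dispersive inequality $\|e^{itp(\nabla/i)}P_1 f\|_{L^{q}_x}\lesssim |t|^{-\sigma(1-2/q)}\|f\|_{L^{q'}_x}$ with $\sigma = 1/2+1/k$.

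Next I would feed this into the Keel--Tao abstract Strichartz theorem (or the Hardy--Littlewood--Sobolev/Christ--Kiselev argument directly): with the energy estimate $\|e^{itp(\nabla/i)}P_1f\|_{L^2_x}\lesssim\|f\|_{L^2_x}$ and the decay exponent $\sigma$, one obtains $\|e^{itp(\nabla/i)}P_1 f\|_{L^p_tL^q_x}\lesssim\|f\|_{L^2_x}$ whenever $p,q\ge 2$, $(p,q,\sigma)\ne(2,\infty,1)$, and $\frac1p+\frac{\sigma}{q}\le\frac{\sigma}{2}$, i.e. $\frac{1}{p}\le\sigma(\frac12-\frac1q)$. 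With the admissible-from-Theorem-1.1 exponent $\sigma = 5/6$ (using $k\le 3$, i.e. no cusps) this reads $\frac1p+\frac{5}{6q}\le\frac{5}{12}$, which is precisely the stated range. Finally I would undo the Littlewood--Paley reduction: raising to the $p$-th power, using Minkowski in $L^{q/2}_x$ (valid since $q\ge 2$) and then the square-function estimate in $L^p_t$ (valid since $p\ge2$), the dyadic sum $\sum_N N^{2s}\|P_N u_0\|_{L^2}^2$ appears with the weight $N^{2s}$ produced by the scaling, giving $\|u\|_{L^p_tL^q_x}\lesssim\|\,|D|^s u_0\|_{L^2}$.

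The main obstacle is the first reduction step: verifying that the Fourier integral defining $K$ can genuinely be decomposed into the normal-form oscillatory integrals $I_\Phi$ of Theorem \ref{thm:OscillatoryIntegralEstimate} with the scaling parameter $\lambda$ tracking $|(x,t)|$ \emph{and} with the spatial frequencies $x/\lambda$ lying in the admissible window $|x_i|\le 10\varepsilon\lambda$ — this requires carefully separating the region where $(x,t)$ is "transverse" to the characteristic surface (handled by nonstationary phase) from the region where the stationary point lies near the degenerate locus (handled by Theorem \ref{thm:OscillatoryIntegralEstimate}), and checking that the affine change of variables bringing $p$ to normal form on each patch of $\{|\xi|\sim 1\}$ does not destroy the uniformity. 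Once this geometric bookkeeping is in place, the remaining steps are the standard $TT^*$/Keel--Tao and Littlewood--Paley arguments.
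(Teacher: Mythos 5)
Your proposal matches the paper's proof essentially step for step: both derive the $(1+|t|)^{-5/6}$ decay of the frequency-localized kernel from Theorem \ref{thm:OscillatoryIntegralEstimate} (observing that homogeneity rules out cusps, so only $k\le 3$ occurs), feed this into the Keel--Tao abstract Strichartz theorem to obtain the frequency-$1$ estimate in the range $\frac1p+\frac{5}{6q}\le\frac{5}{12}$, and then sum the dyadic pieces using the square-function estimate (valid for $p,q\ge 2$) together with the scaling $\xi\mapsto\xi/N$, $x\mapsto Nx$, $t\mapsto N^{\mu}t$, which produces precisely the derivative weight $N^{s}$ with $s=1-\frac2q-\frac\mu p$. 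The geometric reduction of the kernel to the normal-form oscillatory integrals $I_{\Phi}$, which you flag as the main obstacle, is indeed the delicate point, but it is carried out once and for all in Section 2 of the paper rather than inside the Strichartz argument, so your outline is complete.
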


Next, we consider lower order perturbations
\begin{equation}
\label{eq:LOT}
p(\xi) = p_{\mu}(\xi) + p_{\nu}(\xi), \quad p_{\mu}, p_{\nu} \in C^\infty(\R^2 \backslash 0)
\end{equation}
with $p_\mu(\lambda \xi)= \lambda^\mu p_\mu(\xi)$ for some $\mu > 0$ and any $\lambda > 0$ and $|p_{\nu}(\xi)| \lesssim |\xi|^{\nu}$ for some $\nu < \mu$ as $|\xi| \to \infty$.
\begin{theorem}
\label{thm:LowerOrderPerturbation}
Let $T > 0$ and $\frac{1}{p} + \frac{3}{4q} \leq \frac{3}{8}$, and suppose that $p$ is as in \eqref{eq:LOT} with $\{(\xi,p(\xi)) : \frac{1}{2} < |\xi| < 2 \} \in \mathcal{C}$. Then, we find the following estimate to hold:
\begin{equation}
\label{eq:LocalStrichartzEstimate}
\| e^{it p(\nabla / i)} u_0 \|_{L^p([0,T],L^q(\R^2))} \lesssim_T \| u_0 \|_{H^s}
\end{equation}
with $s = 1 - \frac{2}{q} - \frac{\mu}{p}$.
\end{theorem}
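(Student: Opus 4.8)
\medskip
\noindent\textbf{Proof strategy for Theorem \ref{thm:LowerOrderPerturbation}.}
The plan is to run the usual Littlewood--Paley / Keel--Tao scheme, absorbing the non-homogeneity of $p$ into a small perturbation at each dyadic scale and feeding in the dispersive bound coming from Theorem \ref{thm:OscillatoryIntegralEstimate}. Decompose $u_0=\sum_{\lambda}P_\lambda u_0$ into Littlewood--Paley pieces with frequency support in $\{|\xi|\sim\lambda\}$, $\lambda$ dyadic. For the pieces with $\lambda\le\lambda_0$, $\lambda_0$ a large constant fixed below, I would use only that $e^{itp(\nabla/i)}$ is unitary on $L^2$, together with Bernstein's inequality (to pass from $L^2$ to $L^q$, $q\ge2$) and H\"older in $t\in[0,T]$; on these scales $\|\cdot\|_{H^s}\sim\|\cdot\|_{L^2}$ and the bound is square-summable in $\lambda$, so this contributes $\lesssim_T\|u_0\|_{H^s}$. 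This is the only step that uses finiteness of the time interval.

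For $\lambda>\lambda_0$ I would rescale parabolically. The substitution $\xi=\lambda\eta$, $y=\lambda x$, $s=\lambda^{\mu}t$ turns $e^{itp(\nabla/i)}P_\lambda u_0$, up to a constant multiple of $\lambda^{2}$, into $[e^{is\,q_\lambda(\nabla/i)}G_\lambda](y)$ with
\[
q_\lambda(\eta)=p_\mu(\eta)+\lambda^{-\mu}p_\nu(\lambda\eta),\qquad \widehat{G_\lambda}(\eta)=\chi(\eta)\widehat{u_0}(\lambda\eta),\qquad \|G_\lambda\|_{L^2}\sim\lambda^{-1}\|P_\lambda u_0\|_{L^2},
\]
where $\chi$ is supported in $\{|\eta|\sim1\}$. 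The key observation is that, since $p_\mu$ is $\mu$-homogeneous and the derivative bounds $|\partial^\alpha p_\nu(\xi)|\lesssim_\alpha|\xi|^{\nu-|\alpha|}$ (implicit in the notion of a lower-order perturbation) hold, the remainder $r_\lambda(\eta):=\lambda^{-\mu}p_\nu(\lambda\eta)$ satisfies $\|r_\lambda\|_{C^N(\{|\eta|\sim1\})}\lesssim\lambda^{\nu-\mu}\to0$. Thus, choosing $\lambda_0$ large, $q_\lambda$ is an arbitrarily small $C^N$-perturbation of $p_\mu$, and by the stability of the class $\mathcal C$ under such perturbations (cf.\ \cite{MR778884}) the graph of $q_\lambda$ over $\{|\xi|\sim1\}$ again lies in $\mathcal C$, with structural constants uniform in $\lambda>\lambda_0$.

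Next I would establish the fixed-time dispersive estimate at unit frequency. Covering $\{|\xi|\sim1\}$ by finitely many patches: on those where the Gaussian curvature of the $p_\mu$-surface (hence, for $\lambda_0$ large, of $q_\lambda$) is bounded away from zero, the two-dimensional Van der Corput bound \eqref{eq:VanderCorputIntroduction} gives kernel decay $\lesssim(1+|s|)^{-1}$, uniformly in $y$; on the remaining patches Theorem \ref{thm:OscillatoryIntegralEstimate}, applied with oscillation parameter $s$ and linear term supplied by (the image of) $y$, gives $\lesssim(1+|s|)^{-1/2-1/k}\le(1+|s|)^{-3/4}$ for $k\in\{2,3,4\}$, uniformly for $y$ in the admissible range, while a non-stationary phase argument covers the complementary range. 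Summing the patches,
\[
\big\|e^{is\,q_\lambda(\nabla/i)}P_{\sim1}\big\|_{L^1(\R^2)\to L^\infty(\R^2)}\lesssim|s|^{-3/4},\qquad \big\|e^{is\,q_\lambda(\nabla/i)}\big\|_{L^2\to L^2}=1,
\]
uniformly in $\lambda>\lambda_0$. The Keel--Tao theorem (with decay exponent $\sigma=3/4<1$, so no endpoint obstruction) then yields $\|e^{is\,q_\lambda(\nabla/i)}G_\lambda\|_{L^p_s(\R;L^q_y)}\lesssim\|G_\lambda\|_{L^2}$ for every pair on the line $\tfrac1p+\tfrac3{4q}=\tfrac38$, uniformly in $\lambda$. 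Undoing the rescaling, the space, time and $L^2$-normalisation factors combine to $\lambda^{2-2/q}\cdot\lambda^{-\mu/p}\cdot\lambda^{-1}=\lambda^{1-2/q-\mu/p}=\lambda^{s}$, whence $\|e^{itp(\nabla/i)}P_\lambda u_0\|_{L^p([0,T];L^q)}\lesssim\|P_\lambda u_0\|_{H^s}$ for $\lambda>\lambda_0$ and $(p,q)$ on the line. The off-line cases $\tfrac1p+\tfrac3{4q}<\tfrac38$ follow by taking $\tilde p<p$ on the line with the same $q$, H\"older in $t$ on $[0,T]$, and the embedding $H^{s}\hookrightarrow H^{\tilde s}$ with $\tilde s=1-2/q-\mu/\tilde p\le s$. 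Finally, since $p,q\ge2$, the Littlewood--Paley square function estimate with Minkowski's inequality in $L^{q/2}_x$ and $L^{p/2}_t$ gives $\|u\|_{L^p_tL^q_x}^2\lesssim\sum_\lambda\|P_\lambda u\|_{L^p_tL^q_x}^2\lesssim\sum_\lambda\|P_\lambda u_0\|_{H^s}^2\sim\|u_0\|_{H^s}^2$, which is \eqref{eq:LocalStrichartzEstimate}.

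The main obstacle is the rescaling step: one must verify that after parabolic rescaling at frequency $\lambda$ the non-homogeneous part $p_\nu$ perturbs the characteristic surface only by $\mathcal O(\lambda^{\nu-\mu})$ in $C^N$, so that the rescaled surface remains in $\mathcal C$ and Theorem \ref{thm:OscillatoryIntegralEstimate} applies with $\lambda$-uniform constants. This is precisely what forces $\lambda>\lambda_0$, hence the restriction to a finite time interval and the need to treat the low and moderate frequencies by hand. The remaining points---uniformity of the dispersive bound in the spatial variable (the non-stationary and intermediate regimes) and the $\ell^2$-valued Littlewood--Paley summation---are routine.
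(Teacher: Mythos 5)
Your proposal follows essentially the same route as the paper: dyadic decomposition; parabolic rescaling of each high-frequency piece so that the symbol becomes $p_\mu(\eta)+N^{-\mu}p_\nu(N\eta)$; stability of the class $\mathcal{C}$ (Remark \ref{rem:Stability}) to obtain a $(1+|s|)^{-3/4}$ dispersive bound uniform in the dyadic parameter once it exceeds a threshold; Keel--Tao; and a crude H\"older/Bernstein bound for the remaining low frequencies, which is the only place $T<\infty$ enters. Your observation that symbol-type bounds $|\partial^\alpha p_\nu(\xi)|\lesssim|\xi|^{\nu-|\alpha|}$ are needed for the rescaled perturbation to be small in $C^N$ is a fair reading of what the paper leaves implicit when it invokes Remark \ref{rem:Stability}, and your scaling bookkeeping $\lambda^{2-2/q-\mu/p}\cdot\lambda^{-1}=\lambda^{s}$ is correct.

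One step is stated backwards. To pass from the sharp line $\tfrac1p+\tfrac{3}{4q}=\tfrac38$ to the interior $\tfrac1p+\tfrac{3}{4q}<\tfrac38$ you take $\tilde p<p$ with the same $q$ and apply H\"older in $t$ on $[0,T]$; but H\"older on a finite interval only \emph{lowers} the time exponent ($\|f\|_{L^{\tilde p}([0,T])}\le T^{1/\tilde p-1/p}\|f\|_{L^{p}([0,T])}$ for $\tilde p\le p$), so it bounds the $L^{\tilde p}$ norm by the $L^{p}$ norm, not the $L^{p}$ norm by the $L^{\tilde p}$ norm as you need. The slip is harmless: Keel--Tao \cite[Theorem~1.2]{MR1646048} already yields all $\sigma$-admissible pairs, i.e.\ the full closed region $\tfrac1p+\tfrac{3}{4q}\le\tfrac38$, $p,q\ge2$ (no endpoint obstruction since $\sigma=3/4<1$), which is exactly how the paper obtains \eqref{eq:GenericStrichartzEstimates}; alternatively one can interpolate with the trivial $L^\infty_t L^2_x$ energy bound. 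With that one-line correction your argument coincides with the paper's proof, with the details (which the paper compresses into ``by the arguments of the previous proof'') written out.
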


As second application, we show uniform resolvent estimates for elliptic differential operators:
We consider partial differential operators
\begin{equation}
\label{eq:DifferentialOperator}
P(D) = p(-i \nabla_x)
\end{equation}
such that for $u \in \mathcal{S}'(\R^3)$ we have
\begin{equation*}
\mathcal{F}( P(D) u) (\xi) = p(\xi) \hat{u}(\xi).
\end{equation*}
By $\alpha$-ellipticity we mean that there is $\alpha > 0$ and $R>0$ such that for $|\xi| \geq R >0$
\begin{equation}
\label{eq:Ellipticity}
|p(\xi)| \gtrsim |\xi|^\alpha.
\end{equation}

\begin{theorem}
\label{thm:LimitingAbsorptionPrinciple}
Let $P(D)$ be an $\alpha$-elliptic differential operator and suppose that $\{ p(\xi) = 0 \} \in \mathcal{C}$. Then, there exists a distributional solution $u \in L^q(\R^3)$ such that
\begin{equation*}
P(D) u = f
\end{equation*}
for $f \in L^p(\R^3)$, which satisfies the estimate
\begin{equation*}
\| u \|_{L^q(\R^3)} \lesssim \| f \|_{L^{p_1}(\R^3) \cap L^{p_2}(\R^3)}
\end{equation*}
provided that
$(\frac{1}{p_1},\frac{1}{q}) \in [0,1]^2$ with
\begin{equation*}
\frac{1}{p_1} > \frac{7}{10}, \quad \frac{1}{q} < \frac{3}{10}, \quad \frac{1}{p_1} - \frac{1}{q} \geq \frac{4}{7}
\end{equation*}
and for $\alpha \leq 3$,
\begin{equation*}
0 \leq \frac{1}{p_2} - \frac{1}{q} \leq \frac{\alpha}{3}, \quad \big( \frac{1}{q}, \frac{1}{p_2} \big) \notin 
\{(0,\frac{\alpha}{3}), \; (1-\frac{\alpha}{3},1)\}.
\end{equation*}
\end{theorem}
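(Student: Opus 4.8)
The plan is a limiting absorption argument. The characteristic variety $\Sigma:=\{p=0\}\in\mathcal{C}$ being a hypersurface, we may take $p$ real-valued; put $R_\epsilon f:=\mathcal{F}^{-1}[(p(\xi)-i\epsilon)^{-1}\widehat f\,]$ for $\epsilon>0$, so that $P(D)R_\epsilon f=f+i\epsilon R_\epsilon f$. Everything reduces to the uniform multiplier estimate $\|R_\epsilon f\|_{L^q}\lesssim\|f\|_{L^{p_1}\cap L^{p_2}}$ with constant independent of $\epsilon\in(0,1)$: then $i\epsilon R_\epsilon f\to0$ in $\mathcal{S}'$, and a distributional limit $u$ of a sequence $R_{\epsilon_n}f$ lies in $L^q$ (by lower semicontinuity of the norm, since $q>1$) and solves $P(D)u=f$ with the asserted bound. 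First I would fix $R\gg1$, $\delta_0\ll1$ and decompose the symbol $m_\epsilon(\xi)=(p(\xi)-i\epsilon)^{-1}$ by a smooth partition of unity into the pieces living on (i) $\{|\xi|\ge R\}$, (ii) $\{|\xi|\le2R,\ \mathrm{dist}(\xi,\Sigma)\ge\delta_0\}$, and (iii) $\{\mathrm{dist}(\xi,\Sigma)<\delta_0\}$.

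Pieces (i) and (ii) are the routine elliptic contributions and should produce exactly the conditions on $(1/q,1/p_2)$. On (i), $\alpha$-ellipticity makes $1/p(\xi)$ obey the bounds of a symbol of order $-\alpha$, so the corresponding operator is dominated by the Bessel potential $(1-\Delta)^{-\alpha/2}$ on $\R^3$, and Hardy--Littlewood--Sobolev gives $L^{p_2}\to L^q$ exactly on $0\le 1/p_2-1/q\le\alpha/3$ (whence the hypothesis $\alpha\le3$), with the two endpoints $(1/q,1/p_2)\in\{(0,\alpha/3),(1-\alpha/3,1)\}$ excluded. On (ii) the multiplier is smooth, compactly supported and bounded uniformly in $\epsilon$ (since $|p(\xi)-i\epsilon|\gtrsim\delta_0$ there), hence bounded on every $L^r$, $1<r<\infty$, and absorbed into the same range.

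The substance of the proof is (iii). As $\Sigma$ is a smooth compact hypersurface, $\nabla p\ne0$ in a tubular neighborhood, so for $|\tau|<\delta_0$ the level sets $\Sigma_\tau=\{p=\tau\}$ foliate it, and --- exhibiting $\Sigma_\tau$ via the implicit function theorem in the local graph form appearing in the definition of $\mathcal{C}$ --- each $\Sigma_\tau$ again belongs to $\mathcal{C}$ with constants uniform in $\tau$ (the worst local model remaining $A_3$). By the coarea formula, the operator whose symbol is the piece of $m_\epsilon$ supported in (iii) equals $\int_{|\tau|<\delta_0}(\tau-i\epsilon)^{-1}\,\mathcal{E}_\tau\mathcal{R}_\tau\,d\tau$, where $\mathcal{R}_\tau$ restricts $\widehat f$ to $\Sigma_\tau$ and $\mathcal{E}_\tau=\mathcal{R}_\tau^*$ is the extension against the density $dS/|\nabla p|$. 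Theorem~\ref{thm:OscillatoryIntegralEstimate}, applied on each slice and uniformly over the linear terms, yields $|\int_{\Sigma_\tau}e^{ix\cdot\xi}\,dS(\xi)|\lesssim(1+|x|)^{-3/4}$ uniformly in $\tau$ (the slowest case $k=4$; folds and $A_2$-cusps decay faster), hence, via $TT^*$, the Stein--Tomas-type restriction and extension bounds attached to the decay rate $\beta=3/4$, again uniformly in $\tau$. Finally I would decompose $|\tau-i\epsilon|\sim2^{-j}$ and estimate each dyadic block, playing the principal-value cancellation in $\int(\tau-i\epsilon)^{-1}\,d\tau$ against the weight $2^{j}$ from the singularity and interpolating the slab multipliers --- Fourier restriction to a $2^{-j}$-neighborhood of a $3/4$-decaying surface --- with the trivial $L^1\to L^\infty$ and $L^2\to L^2$ bounds; the sum over $j$ converges precisely on $1/p_1>7/10$, $1/q<3/10$, $1/p_1-1/q\ge 4/7$, uniformly in $\epsilon$ and in $z$ near $0$.

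The hard part will be this summation in (iii): the $2^{-j}$-slab estimates must beat the weight $2^{j}$, and with only $3/4$-decay this succeeds exactly in the open region above --- the line $1/p_1-1/q=4/7$ is the Stein--Tomas endpoint for decay $3/4$ in three dimensions, and the corner $(7/10,3/10)$ of the naive quadrant is genuinely lost. A subsidiary but essential point is the uniformity in $\tau$: one must verify that the whole family $\{\Sigma_\tau\}$ stays in $\mathcal{C}$ with fixed constants, which is what makes Theorem~\ref{thm:OscillatoryIntegralEstimate} usable across it. Once the uniform-in-$\epsilon$ estimate is in place, the limiting absorption step --- extracting a distributional limit of $\{R_{\epsilon_n}f\}$ in $L^q$ --- delivers the claimed solution together with its bound.
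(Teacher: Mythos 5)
Your outline follows the same architecture as the paper's proof: reduce to a uniform resolvent bound, split the multiplier into a piece near the characteristic variety and complementary elliptic pieces, treat the elliptic pieces by Bessel potential/HLS estimates (which generate exactly the $(1/p_2,1/q)$ conditions), foliate a tubular neighborhood of $\Sigma$ by level sets, invoke the uniform decay of Theorem~\ref{thm:OscillatoryIntegralEstimate} along the slices (which requires the stability observation of Remark~\ref{rem:Stability}, namely that $\Sigma_\tau\in\mathcal{C}$ with uniform constants), and finally extract a distributional limit using weak compactness/lower semicontinuity. These are all the right ingredients.

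The place where you are too quick is the reduction of the singular piece to ``Stein--Tomas-type restriction and extension bounds attached to the decay rate $3/4$.'' The $TT^*$ argument from $|\widehat{d\sigma_\tau}(x)|\lesssim(1+|x|)^{-3/4}$ gives the self-dual line $1/p-1/q=4/7$, i.e., the point $(11/14,3/14)$, but the theorem requires bounds in the whole pentagonal region cut off by $1/p_1>7/10$ and $1/q<3/10$. Obtaining these off-diagonal exponents is precisely the content of Proposition~\ref{prop:FourierRestrictionExtension}: one needs strong $L^p\to L^q$ Fourier restriction--extension bounds in the open pentagon, weak-type and dual weak-type bounds on the open boundary segments $(B',C']$ and $(B,C]$, and restricted weak-type bounds at the inner corners $B=(7/10,9/70)$, $B'=(61/70,3/10)$, together with real interpolation and duality. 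Your claim that ``the sum over $j$ converges precisely on $1/p_1>7/10$, $1/q<3/10$, $1/p_1-1/q\ge4/7$'' implicitly uses all of this; without establishing the full restriction--extension estimate you have no $2^{-j}$-slab bound at the off-diagonal exponents, and at the boundary $1/p_1-1/q=4/7$ the dyadic sum is only conditionally summable, so the strong bound there requires precisely the interpolation structure that Proposition~\ref{prop:FourierRestrictionExtension} provides. In short, you have stated the boundary conditions but not the mechanism that produces them.

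A secondary, stylistic difference: after changing to the coordinates $(\tau,q)$, the paper handles the singular density $1/(\tau+i\delta)$ by splitting it into real and imaginary parts $\mathfrak{R}-i\mathfrak{I}$ and treating $\mathfrak{I}(D)$ by Minkowski plus restriction--extension, and $\mathfrak{R}(D)$ by a dedicated principal-value lemma from the reference. Your dyadic decomposition $|\tau-i\epsilon|\sim2^{-j}$ is a legitimate alternative way of organizing the same singularity, but it does not reduce the amount of work -- the hard part is still the restriction--extension input, not the $\tau$-accounting. Once Proposition~\ref{prop:FourierRestrictionExtension} is supplied, either bookkeeping scheme will close the argument.
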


Our next application concerns existence and completeness of wave operators for discrete Schr\"odinger operators 
\begin{align*}
H=-\frac{1}{2}\Delta+V\quad \mbox{on}\quad \ell^2(\Z^3),
\end{align*}
with deterministic or random potentials $V\in\ell^q(\Z^3)$.
Here 
\begin{align*}
(\Delta u)(x)=6u(x)-\sum_{|e|=1}u(x+e)
\end{align*}
is the discrete Laplacian. We first consider the free operator $H_0=-\frac{1}{2}\Delta$ and denote by $E_0(\lambda)$ its spectral measure and by $R_0(z)$ its resolvent. By slight abuse of notation we also write $R_0(\lambda)$ for any one of the (in general distinct) limiting resolvents $R_0(\lambda\pm i 0)$.

\begin{theorem}\label{theorem resolvent discrete}
Let $J$ be a compact subset of $\R\setminus\{0,2,3,4,6\}$. Then, the following estimates hold:
\begin{enumerate}
\item[(1)] Spectral measure estimate:
\begin{align*}
\sup_{\lambda\in J}\|E_0'(\lambda)\|_{\ell^{14/11}(\Z^3)\to \ell^{14/3}(\Z^3)}<\infty.
\end{align*}
\item[(2)] Uniform resolvent estimate:
\begin{align*}
\sup_{\lambda\in J}
\|R_0(\lambda)\|_{\ell^{14/11}(\Z^3)\to \ell^{14/3}(\Z^3)}<\infty.
\end{align*}
\item[(3)] H\"older continuity: For any $\delta\in (0,1]$,
\begin{align*}
\sup_{\lambda,\mu\in J}
|\lambda-\mu|^{-\beta_{\delta}}\|R_0(\lambda)-R_0(\mu)\|_{\ell^{p_{\delta}}(\Z^3)\to \ell^{p_{\delta}'}(\Z^3)}<\infty,
\end{align*}
where $\beta_{\delta}=(1/p-1)\delta$ and $1/p_{\delta}-1/p_{\delta}'=1/(7/4-\delta)$. 
\end{enumerate}
\end{theorem}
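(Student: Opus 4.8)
The plan is to pass to the Fourier side and recognise all three assertions as Fourier restriction--extension estimates for the Fermi surfaces of the lattice, the sharp decay from Theorem~\ref{thm:OscillatoryIntegralEstimate} being the only analytic input. Identifying $\ell^2(\Z^3)$ with $L^2(\T^3)$, $\T^3=(\R/2\pi\Z)^3$, via $u\mapsto\hat u(\xi)=\sum_{x\in\Z^3}u(x)e^{-ix\cdot\xi}$, the operator $H_0$ acts as multiplication by the symbol $\Phi(\xi)=3-\cos\xi_1-\cos\xi_2-\cos\xi_3$ (with the sign convention that makes $\sigma(H_0)=[0,6]$), so $R_0(z)$, $E_0(\lambda)$ and $E_0'(\lambda)$ are Fourier multipliers on $\Z^3$. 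For a regular value $\lambda$ the co-area formula gives
\begin{equation*}
\langle E_0'(\lambda)f,g\rangle= c\int_{\Sigma_\lambda}\hat f(\xi)\,\overline{\hat g(\xi)}\,\frac{d\sigma_\lambda(\xi)}{|\nabla\Phi(\xi)|},\qquad\Sigma_\lambda:=\{\xi\in\T^3:\Phi(\xi)=\lambda\},
\end{equation*}
so $E_0'(\lambda)$ is, up to a constant, convolution on $\Z^3$ with $\widehat{\nu_\lambda}$, where $\nu_\lambda:=|\nabla\Phi|^{-1}\,d\sigma_\lambda$. Thus (1) is exactly the Stein--Tomas--type bound $\|f\ast\widehat{\nu_\lambda}\|_{\ell^{14/3}(\Z^3)}\lesssim\|f\|_{\ell^{14/11}(\Z^3)}$, equivalently the $L^2$ restriction estimate $\|\hat f|_{\Sigma_\lambda}\|_{L^2(d\sigma_\lambda)}\lesssim\|f\|_{\ell^{14/11}}$; note $\tfrac{3}{14}+\tfrac{11}{14}=1$ and $2+\tfrac2{3/4}=\tfrac{14}{3}$, the Stein--Tomas exponent of a surface with Fourier decay $\tfrac34$. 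We may assume $J\subset(0,6)\setminus\{2,3,4\}$ is compact, the cases $\lambda\notin[0,6]$ being trivial since then $(\Phi-\lambda)^{-1}$ is smooth on $\T^3$ and the associated kernels lie in $\ell^1(\Z^3)$.

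The geometric heart of the matter is a \emph{uniform} decay bound for $\widehat{\nu_\lambda}$. The excluded values $0,2,4,6$ are precisely the critical values of $\Phi$, while at $\lambda=3$ the level set $\{\cos\xi_1+\cos\xi_2+\cos\xi_3=0\}$ contains line segments (such as $\{(t,\pi-t,\tfrac\pi2):t\in\R\}$) and so fails to lie in $\mathcal C$; for $\lambda\in J$, on the other hand, $\Sigma_\lambda$ is a smooth compact surface, and the first step is to show that $\Sigma_\lambda\in\mathcal C$ \emph{with constants uniform in} $\lambda\in J$ --- that is, to cover $\Sigma_\lambda$ by finitely many patches on which it is a graph of the normal form $v^2\pm u^k+f$ with $k\le4$ and uniformly controlled $f$. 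This is precisely the classification of the fold and cusp singularities of the Fermi surfaces of the three-dimensional cubic lattice; I would extract it from the calculations already present in Erd\H{o}s--Salmhofer \cite{MR2324803} (see also Taira \cite{Taira2020B}). Granting it, Theorem~\ref{thm:OscillatoryIntegralEstimate} applied in each patch and summed in the usual way (the Littman--Greenleaf passage from oscillatory-integral decay to Fourier decay of a surface measure, cf.\ \cite{MR0155146,MR620265}), together with the lower bound $|\nabla\Phi|\gtrsim_J1$ on $\Sigma_\lambda$, yields
\begin{equation*}
|\widehat{\nu_\lambda}(x)|\lesssim_J(1+|x|)^{-3/4}\qquad\text{for all }x\in\Z^3,\ \lambda\in J,
\end{equation*}
the worst case $k=4$ (an $A_3$ cusp) producing the exponent $\tfrac12+\tfrac14$. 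I expect this uniform control --- both the uniform membership $\Sigma_\lambda\in\mathcal C$ and the behaviour of the constants as $\lambda$ approaches $\{2,3,4\}$, where $\Sigma_\lambda$ leaves $\mathcal C$ and the decay degrades --- to be the main obstacle.

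Given the decay, (1) is the textbook Stein--Tomas argument: write $\widehat{\nu_\lambda}=\sum_{j\ge0}K_j$ with $\operatorname{supp}K_j\subset\{|x|\sim2^j\}$; convolution with $K_j$ maps $\ell^1\to\ell^\infty$ with norm $\lesssim2^{-3j/4}$ and $\ell^2\to\ell^2$ with norm $\lesssim2^{j}$ (as $\widehat{K_j}$ is $\nu_\lambda$ mollified at scale $2^{-j}$); interpolating gives $\ell^{14/11}\to\ell^{14/3}$ with dyadic norm $\lesssim2^{j(-3/4+7\theta/4)}$, which equals $1$ at the relevant $\theta=\tfrac37$, and the resulting borderline sum is controlled by the standard analytic-interpolation refinement at the Stein--Tomas endpoint --- all constants depending only on the uniform decay. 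For (2) I would use Stone's formula $R_0(\lambda\pm i0)=\mathrm{p.v.}\!\int_{[0,6]}(\mu-\lambda)^{-1}E_0'(\mu)\,d\mu\pm i\pi E_0'(\lambda)$, insert a cutoff $\chi_{\mathrm{near}}$ localising to a $\delta_0(J)$-neighbourhood of $\Sigma_\lambda$ that avoids $\{0,2,3,4,6\}$, and dispose of the complementary multiplier $\chi_{\mathrm{far}}(\Phi-\lambda)^{-1}$, which is smooth on $\T^3$ and hence has an $\ell^1$ kernel, directly; the term $\pm i\pi E_0'(\lambda)$ is (1). On the near part the co-area formula rewrites the principal value as an average of $\widehat{\tilde\nu_\mu}(x)-\widehat{\tilde\nu_\lambda}(x)$ against $(\mu-\lambda)^{-1}$ over $\mu\in(\lambda-\delta_0,\lambda+\delta_0)$, a range on which all the $\Sigma_\mu$ are uniformly in $\mathcal C$, so that $|\widehat{\tilde\nu_\mu}(x)-\widehat{\tilde\nu_\lambda}(x)|\lesssim\min(|\mu-\lambda|(1+|x|)^{c},(1+|x|)^{-3/4})$ for some fixed $c$; more cleanly, and avoiding a logarithmic loss in the pointwise estimate of the kernel, one may deduce (2) from the restriction estimate (1) by an analytic-interpolation argument in the spirit of Kenig--Ruiz--Sogge applied to the family $\{(\Phi-\lambda\mp i0)^{-z}\chi_{\mathrm{near}}\}$.

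Finally, (3) follows by running the same analysis for the difference of limiting resolvents, which is convolution with the kernel $x\mapsto(\lambda-\mu)\int_{\T^3}e^{ix\cdot\xi}(\Phi-\lambda\mp i0)^{-1}(\Phi-\mu\mp i0)^{-1}\,d\xi$; localising near $\Sigma_\lambda\cup\Sigma_\mu$ (close together and uniformly in $\mathcal C$ when $|\lambda-\mu|$ is small) and using the difference bound above, one obtains a kernel estimate and thence a Stein--Tomas-type operator bound. At the sharp exponent pair (gap $\tfrac47$) there is no slack, but at a slightly relaxed exponent pair there is, and the factor $\min(|\mu-\lambda|(1+|x|)^{c},\cdot)$ together with interpolation converts this slack into the H\"older gain $|\lambda-\mu|^{\beta_\delta}$ with $\beta_\delta>0$ for $\delta>0$, the limit $\delta\to0$ returning the endpoint bound (2). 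In short, the entire theorem rests on securing the uniform $|x|^{-3/4}$ decay of $\widehat{\nu_\lambda}$, $\lambda\in J$; after that, (1)--(3) are a by-now-standard combination of Stein--Tomas, Stone's formula, and interpolation.
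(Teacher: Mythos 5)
Your reasoning is correct, and the key analytic input (the uniform $(1+|x|)^{-3/4}$ decay of $\widehat{\nu_\lambda}$, obtained by verifying $\Sigma_\lambda\in\mathcal{C}$ via Erd\H{o}s--Salmhofer and then applying Theorem~\ref{thm:OscillatoryIntegralEstimate}) is exactly the one the paper uses. The difference is in how the remaining steps are handled: the paper's proof is essentially a citation, invoking Taira~\cite[Theorem~1.2~(i),(ii)]{MR4153099} to pass directly from the kernel decay to the resolvent bound~(2) and the H\"older continuity~(3), and then obtaining the spectral measure estimate~(1) trivially from~(2) via Stone's formula $E_0'(\lambda)=(2\pi i)^{-1}\bigl(R_0(\lambda+i0)-R_0(\lambda-i0)\bigr)$. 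You instead unfold what Taira's abstract theorem encodes: the dyadic Stein--Tomas decomposition for~(1), then Stone's formula together with a near/far Fourier cutoff (or, as you also suggest, Kenig--Ruiz--Sogge-type analytic interpolation applied to $(\Phi-\lambda\mp i0)^{-z}$) to upgrade~(1) to~(2), and a kernel-difference estimate plus interpolation for~(3). Note that the logical direction between~(1) and~(2) is reversed relative to the paper --- both routes are valid, but the paper's $(2)\Rightarrow(1)$ is one line, whereas your $(1)\Rightarrow(2)$ requires the principal-value machinery you sketch. What you flag as ``the main obstacle,'' namely the uniform membership $\Sigma_\lambda\in\mathcal{C}$ for $\lambda\in J$ compact away from $\{0,2,3,4,6\}$, is settled in the paper by the observation that Erd\H{o}s--Salmhofer's assumptions~1, 2, 4 (already verified there) suffice for this; so there is no gap, only a piece of bookkeeping you correctly identified as needing attention.
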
 

\begin{remark}
The above estimates are stable under small perturbations $V\in \ell^{7/4}(\Z^3;\C)$. This follows from the resolvent identity and H\"older's inequality, see e.g.\ \cite{cuenin2020spectral} for similar arguments.
\end{remark}

Next, we state our result on existence and completeness of wave operators for deterministic and random potentials. 
For deterministic potentials we require $V\in \ell^{7/4}(\Z^3;\R)$. In the random case we assume that the potential $V_{\omega}(x)=\omega_xv_x$ has sub-gaussian distribution, for instance $\{\omega_x:x\in\Z^d\}$ are Bernoulli or normalized Gaussians. Bourgain \cite{MR1877824,MR2083389} showed that randomization allows to cut the decay of the potential in half. The tail distribution is important here as the argument of Bourgain uses Dudley's $L^{\psi_2}$-estimate (see e.g. \cite[8.1]{MR3837109}). The random part of the following theorem is an extension of Bourgain's \cite{MR2083389} result from the two-dimensional to the three-dimensional lattice $\Z^3$.

\begin{theorem}\label{wave operators discrete}
Assume that one of the following holds:
\begin{enumerate}
\item $V\in \ell^{7/4}(\Z^3;\R)$ is a deterministic potential. 
\item $\epsilon > 0$ is fixed and $V_{\omega}$ is a random potential such that $(1+|\cdot|)^{\epsilon}V_{\omega}\in \ell^{7/2}(\Z^3;\R)$.
\end{enumerate}
Then, the wave operators
\begin{align*}
W_{\pm}(H,H_0):=\slim_{t\to\pm\infty}e^{i t H}e^{-i t H_0}
\end{align*}
exist and are complete.
\end{theorem}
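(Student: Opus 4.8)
The plan is to deduce both existence and completeness from the uniform resolvent bounds of Theorem \ref{theorem resolvent discrete} through Kato's theory of smooth perturbations. First I would fix the symmetric factorization $V=v_2^*v_1$ with $v_1=|V|^{1/2}$ and $v_2=\mathrm{sgn}(V)\,|V|^{1/2}$, regarded as multiplication operators on $\ell^2(\Z^3)$. In the deterministic case $V\in\ell^{7/4}$ gives $v_1,v_2\in\ell^{7/2}(\Z^3)\subset c_0$, so each $v_j$ is bounded on $\ell^2$ and $V$ is compact; in particular $\sigma_{\mathrm{ess}}(H)=\sigma_{\mathrm{ess}}(H_0)$ is the bounded interval $\sigma(H_0)$. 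Writing $K=\{0,2,3,4,6\}$ for the finite exceptional set and using that $H_0$ has purely absolutely continuous spectrum (hence $E_0(K)=0$), it suffices to work over an exhausting sequence of compacta $J_n\Subset\R\setminus K$.

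Next I would check that $v_1E_0(J)$ and $v_2E_0(J)$ are $H_0$-smooth for every compact $J\subset\R\setminus K$. The point is the elementary observation that, by H\"older's inequality, multiplication by an $\ell^{7/2}$-sequence maps $\ell^{14/3}\to\ell^2$ and $\ell^2\to\ell^{14/11}$ boundedly (the exponents satisfy $\tfrac12=\tfrac3{14}+\tfrac27$). Sandwiching the resolvent bound of Theorem \ref{theorem resolvent discrete}(2) between two such factors yields $\sup_{\lambda\in J}\|v_j R_0(\lambda\pm i0)v_k\|_{\mathcal B(\ell^2)}<\infty$ for $j,k\in\{1,2\}$, which by Kato's criterion is precisely local $H_0$-smoothness of $v_1$ and $v_2$ on $J$ (part (1) of Theorem \ref{theorem resolvent discrete} records the same fact in terms of the spectral density).

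The substantive analytic input is then a limiting absorption principle for $H$ itself, obtained by a Birman--Schwinger/Fredholm argument. The operator $Q(z)=v_1R_0(z)v_2$ is compact on $\ell^2$ (approximate $v_1,v_2$ by finitely supported $\ell^{7/2}$-sequences), and by the bounds above together with the H\"older continuity of Theorem \ref{theorem resolvent discrete}(3) it extends continuously to $J$ from each half-plane; since $\|Q(z)\|<1$ for $\mathrm{Im}\,z$ large, the analytic Fredholm theorem shows $I+Q(\lambda\pm i0)$ is invertible on $J$ outside a set $\mathcal E$ discrete in $\R\setminus K$, whose points correspond to eigenvalues of $H$ (so $\sigma_{\mathrm{sc}}(H)=\emptyset$, as a singular continuous measure cannot live on the countable set $K\cup\mathcal E$). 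Feeding $(I+Q(\lambda\pm i0))^{-1}$ into the resolvent identity $R(z)=R_0(z)-R_0(z)v_2\,(I+v_1R_0(z)v_2)^{-1}v_1R_0(z)$ gives uniform bounds for $v_jR(\lambda\pm i0)v_k$ on compact subsets of $(\R\setminus K)\setminus\mathcal E$, i.e.\ local $H$-smoothness of $v_1,v_2$ there. Since $(\R\setminus K)\setminus\mathcal E$ still carries the full $H_0$-spectral measure, Kato's smooth perturbation theorem yields existence and completeness of $W_\pm(H,H_0)$ (and of $W_\pm(H_0,H)$), proving (1).

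For the random potential in (2) the symmetric factorization is useless, since for Bernoulli weights $|V_\omega|^{1/2}$ carries no randomness, and the deterministic H\"older bound fails because we only assume $(1+|\cdot|)^{\epsilon}V_\omega\in\ell^{7/2}$ — roughly half the decay. Here I would follow Bourgain \cite{MR2083389}: expand the Birman--Schwinger operator $Q_\omega(z)$ (equivalently the $T$-matrix) in its Born series and, for each multilinear term, use Khintchine/hypercontractivity to bound its $L^2_\omega$- (and higher-) moments by the fully contracted diagrams, which only involve $|v_x|^2$ and therefore converge under the weaker decay hypothesis; the supremum over $\lambda\in J$ is then handled by Dudley's entropy bound, using the sub-gaussian tails of the chaos variables and the metric entropy of $\{R_0(\lambda\pm i0)\}_{\lambda\in J}$ furnished by Theorem \ref{theorem resolvent discrete}(3) (the weight $(1+|\cdot|)^{\epsilon}$ absorbing the residual logarithmic divergence of the entropy integral). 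This shows that almost surely $H_\omega$ obeys, on each $J_n$, the same uniform resolvent bounds as in the deterministic case, after which the conclusion follows exactly as above. The main obstacle is this last step: transplanting Bourgain's two-dimensional chaining argument to $\Z^3$ requires the correct three-dimensional resolvent numerology — which is exactly what Theorem \ref{theorem resolvent discrete} supplies — and verifying that the randomized multilinear estimates close with the stated exponent $7/2$.
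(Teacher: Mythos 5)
Your proposal is correct and follows essentially the same route as the paper: in both cases the deterministic statement is deduced from the uniform resolvent estimates of Theorem \ref{theorem resolvent discrete} via Kato's theory of smooth perturbations (your H\"older bookkeeping $\tfrac12=\tfrac{3}{14}+\tfrac27$ showing that $|V|^{1/2}\in\ell^{7/2}$ intertwines $\ell^{14/3}\to\ell^2\to\ell^{14/11}$ is exactly the right numerology), and the random case is delegated to Bourgain's Born-series/entropy argument with the Stein--Tomas input replaced by the $L^2(\Sigma)\to\ell^{14/3}(\Z^3)$ extension estimate. The one genuine divergence is in how the perturbed resolvent is controlled in the deterministic case: you run the full Agmon--Kato--Kuroda scheme, inverting $I+v_1R_0(\lambda\pm i0)v_2$ by the analytic Fredholm theorem and accepting an exceptional set $\mathcal E$, whereas the paper first splits $V=V_S+V_L$ into a finitely supported (hence finite-rank, trace-class) piece handled by Kato--Rosenblum and an arbitrarily small long-range piece for which the Birman--Schwinger operator has norm less than one, so that no Fredholm inversion and no exceptional set ever appear. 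Your route is self-contained but carries the extra burden of showing that $\mathcal E$ is a closed null set not charged by the absolutely continuous spectral measure (discreteness of embedded eigenvalues away from the thresholds is not entirely automatic and deserves a sentence of justification); the paper's splitting buys a cleaner argument at the price of invoking the trace-class theory and the chain rule for wave operators. Both are standard and both close the proof.
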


\begin{remark}
Korotyaev and M\o ller \cite{MR4018760} proved existence and completeness of wave operators in the deterministic case for $V\in \ell^{p}(\Z^3;\R)$ for $p<6/5$. Since $\ell^p$ spaces on the lattice are nested, the present result is stronger. The random case in three dimensions is completely new.
\end{remark}

\textbf{Acknowledgements} The first author would like to thank Isroil Ikromov, Orif Ibrogimov, and Kouichi Taira for useful discussions. The second author acknowledges financial support by the Deutsche Forschungsgemeinschaft (DFG, German Research Foundation) -- Project-ID 258734477 -- SFB 1173.

\section{Preliminaries}
\label{section:Preliminaries}
The purpose of this section is to define the class of considered surfaces and reduce the claimed decay estimate for the Fourier transform to the oscillatory integral estimate from Theorem \ref{thm:OscillatoryIntegralEstimate}.

We define the class\footnote{Confining to embedded $\Sigma \hookrightarrow \R^3$, the class is a set.} $\mathcal{C}$ of bounded two-dimensional smooth surfaces $\Sigma$, which will be considered in the following. Let $N: \Sigma \to \mathbb{S}^2$ denote the Gauss map of the smooth surface $\Sigma$\footnote{The crucial properties are local. For the sake of simplicity we suppose that $\Sigma$ is globally orientable.} and let $K: \Sigma \to \R$ denote the Gauss curvature. Recall that $K(p) \neq 0$ if and only if the Gauss map is a local diffeomorphism at $p$, in other words $\text{rank } dN_p = 2$. In this case we say that the Gauss map has no singularity at $p$. If
\begin{equation}
\label{eq:RankCondition}
\text{rank } (dN)_p = 1 \text{ and } d (\det dN)_p \neq 0,
\end{equation}
then the zero curvature set $\Gamma = \{ p \in \Sigma : \det dN_p = 0 \}$ is locally a smooth curve, by the implicit function theorem. Let $V$ be a non-vanishing tangent vector field to $\Gamma$. 
\begin{definition}
Assume that \eqref{eq:RankCondition} holds. If $dN(V)$ does not vanish at $p$, then we call $p$ a \emph{fold} (singularity) of the Gauss map. If $dN(V)$ vanishes linearly at $p$, then we call $p$ a \emph{cusp} (singularity). We say that $\Sigma \in \tilde{\mathcal{C}}$ if at each point the Gauss map has either a fold ($A_2$), a cusp ($A_3$), or no singularity ($A_1$). If $\Sigma \in \tilde{\mathcal{C}}$ and $\partial \Sigma = \emptyset$ or there is $\Sigma' \in \tilde{\mathcal{C}}$ with $\Sigma \hookrightarrow \Sigma'$ and $\text{dist}(\Sigma,\partial \Sigma') > 0$), then we say that $\Sigma \in \mathcal{C}$.
\end{definition}
Whitney \cite{MR0073980} showed that singularities of the Gauss map, which are neither folds nor cusps, are highly non-generic. We refer to \cite{MR0341518} and \cite{MR0494220} for further reading and illustration.

The aim of this paper is to estimate the decay of the Fourier transform
\begin{equation*}
\mu ^{\vee}(x) = \int_{\Sigma} e^{i x\cdot\xi} \beta(\xi) d\sigma(\xi), \quad \beta \in C^\infty_c(\Sigma)
\end{equation*}
as
\begin{equation}\label{eq: 3/4 decay}
|\mu^\vee(x)| \lesssim (1+|x|)^{-\frac{3}{4}}
\end{equation}
 with tractable implicit constant suitable for PDE applications. First note that it is enough to analyze a neighbourhood $\mathcal{N}$ of $\{K(p) = 0 \}$. By the above, $\{K(p) = 0 \}$ decomposes into finitely many isolated curves. Note that the curves have to be disjoint by the implicit function theorem.
 
Indeed, the estimate
\begin{equation*}
\big| \int_{\Sigma \backslash \mathcal{N}} e^{ix\cdot\xi} \beta(\xi) d\sigma(\xi) \big| \lesssim (1+|x|)^{-1}
\end{equation*}
with implicit constant depending on $\text{supp}(\beta)$, $\| \beta \|_{C^N}$ and smoothly on $\Sigma$ is a consequence of the Van der Corput lemma \cite{MR3637937,OhLee2020}. By compactness of $\Sigma$, finitely many of the above estimates yield the desired decay.

Let $\Gamma \subseteq \{ K(p) = 0 \}$ be a closed curve and $\mathcal{N}_\Gamma$ be a neighbourhood in $\Sigma$ such that for $ p \in \mathcal{N}_\Gamma$ with $K(p) = 0$ it follows that $p \in \Gamma$. Again by compactness, it suffices to prove the decay
\begin{equation*}
\big| \int_{\mathcal{N}_\Gamma} e^{ix\cdot\xi} \beta(\xi) d\sigma(\xi) \big| \lesssim (1+|x|)^{-\frac{3}{4}}
\end{equation*}
with implicit constant depending on $\text{supp}(\beta)$, $\| \beta \|_{C^N}$ and smoothly on $\mathcal{N}_\Gamma$. Note that the cusps are necessarily isolated points. It is enough to consider the case where $\beta$ is centred at a cusp singularity, with the cusp singularity being the only one in the support of $\beta$, or $\beta$ centred at a fold with no cusps in the support of $\beta$. After rigid motion and changing to graph parametrisation, we can suppose that
\begin{equation*}
\Sigma = \{(u,v,h(u,v)): (u,v) \in B(0,\varepsilon) \}
\end{equation*}
and
\begin{equation*}
\mu^\vee(x) = \int_{B(0,\varepsilon)} e^{ix.(u,v,h(u,v))} \sqrt{1+|\nabla h(u,v)|^2} \beta(u,v) du dv.
\end{equation*}
By a linear change of variables, leaving $|\mu^\vee(x)|$ invariant, we can suppose that $h(0,0) = 0$, $\nabla h(0,0) = 0$. For the sake of brevity, we also write $0 = (0,0)$ and $\nabla h(u,v) = (\partial_u h, \partial_v h)$. The Gauss map in these coordinates is given by
\begin{equation*}
N(u,v) = \frac{1}{\sqrt{1+ |\nabla h(u,v)|^2}} (-h_u, h_v, 1)
\end{equation*}
and the derivative is given by
\begin{equation}
\label{eq:DerivativeGaussMap}
-dN (u,v) = \frac{1}{\sqrt{1+ h_u^2 + h_v^2}} \begin{pmatrix}
h_{uu} & h_{uv} \\
h_{uv} & h_{vv}
\end{pmatrix}
.
\end{equation}
In the following we recall how $h$ can be expressed as perturbation of normal forms depending on a fold or cusp singularity at the origin. After a linear change of coordinates, depending smoothly on $\Sigma$, we may assume by the first condition in \eqref{eq:RankCondition} that
\begin{equation}
\label{eq:SecondDerivatives}
h_{uu}(0) = 0, \quad h_{uv}(0) = 0, \quad h_{vv}(0) = 1.
\end{equation}
The second condition in \eqref{eq:RankCondition} implies that
\begin{equation}
\label{eq:ThirdDerivatives}
(h_{uuu}(0), h_{uuv}(0)) \neq (0,0).
\end{equation}
Recall that the subset of $\Sigma$ with vanishing Gaussian curvature is a smooth curve
\begin{equation*}
\Gamma = \{(u,v,h(u,v)): (u,v) \in B(0,\varepsilon), \; J(u,v) = 0 \} \subseteq \Sigma,
\end{equation*}
where $J = h_{uu} h_{vv} - h_{uv}^2$. By the implicit function theorem, \eqref{eq:SecondDerivatives} implies that locally the equation
\begin{equation}
\label{eq:ParametrisationZeroCurvature}
h_v(u,v) = 0
\end{equation}
has a unique smooth solution $v = \psi(u)$, that is, $h_v(u,\psi(u)) = 0$. A Taylor series expansion of $h$ with respect to the variable $v$ around $\psi(u)$ then shows that
\begin{equation}
\label{eq:Expansionh}
h(u,v) = a(u,v)(v-\psi(u))^2 + b(u)
\end{equation}
for some smooth functions $a$,$b$ such that $a(0) = \frac{1}{2} h_{vv}(0) = \frac{1}{2}$ and $b(u) = \mathcal{O}(u^3)$. This argument is detailed in \cite{MR3524103}. By differentiating \eqref{eq:ParametrisationZeroCurvature}, we find
\begin{equation*}
\psi(0) = \psi_u(0) = 0.
\end{equation*}
We turn to a case-by-case analysis of \eqref{eq:ThirdDerivatives}:

\textbf{Case 1:} $h_{uuu}(0) \neq 0$: We have $[dN(V)](0) = h_{uuu}(0) \neq 0$, which means that the singularity is a fold. Also note that necessarily $b_{uuu}(0) \neq 0$. By the nonlinear change of variables $v \to v + \psi(u)$, which does not change the measure however, we find
\begin{equation*}
\begin{split}
h(u,v) &= a(u,v) v^2 + c u^3 + \mathcal{O}(u^4) \\
&= v^2 + cu^3 + \mathcal{O}(|v|^3 + |v|^2 |u| + |u|^4),
\end{split}
\end{equation*}
with the last line following by Taylor expansion of $a(u,v)$ and $b(u)$.

\textbf{Case 2:} $h_{uuu}(0) = 0, \; h_{uuv}(0) \neq 0$. By the implicit function theorem, $\Gamma$ can be parametrized by $u$ so that $J(u,v(u)) = 0$ on $\Gamma$. We find
\begin{equation*}
v(u) = \frac{h^2_{uuv}(0) - \frac{1}{2} h_{uuuu}(0)}{h_{uuv}(0)} u^2 + \mathcal{O}(|u|^3).
\end{equation*}
The vector field $V = (1,v_u(u))$ is tangent to $\Gamma$ and satisfies
\begin{equation*}
dN_{(u,v(u))}(V) = 
\begin{pmatrix}
0 \\ h_{uuv}(0) u + v_u(u)
\end{pmatrix}
+ \mathcal{O}(|u|^2).
\end{equation*}
This vanishes linearly at the origin if and only if
\begin{equation*}
h_{uuuu}(0) - 3h^2_{uuv}(0) \neq 0,
\end{equation*}
in which case the singularity is a cusp. We compute
\begin{equation*}
h_{uuuu}(0) = 3 \psi_{uu}(0)^2 + b_{uuuu}(0), \quad h_{uuv}(0) = -\psi_{uu}(0)
\end{equation*}
and hence, $b_{uuuu}(0) \neq 0$. After a change of variables $v \to v+ \psi(u)$, we obtain after Taylor expansion of $a$ and $b$:
\begin{equation*}
h(u,v) = a(u,v) v^2 + cu^4 + \mathcal{O}(u^5) =  v^2 + cu^4 + \mathcal{O}(|v|^3 + |v|^2 |u| + |u|^5).
\end{equation*}

In conclusion, the surface under consideration can be locally parametrized as the graph $\{(u,v,h(u,v)): \; (u,v) \in B(0,\varepsilon) \}$ with $h(u,v) = v^2 \pm u^k + f(u,v)$, $k=3,4$, and $f(u,v) = O(|v|^3 + |v|^2 |u| + |u|^{k+1})$ and furthermore,
\begin{equation*}
|\partial^\alpha f(u,v)| \leq C_\alpha \min(\varepsilon^{3-|\alpha|},1).
\end{equation*}
The Jacobians of the involved coordinate changes depend smoothly on the surface, and so do the implicit constants in the above display. Furthermore, by compactness of the surface, only finitely many of the above parametrizations have to be considered.

We turn to the Fourier decay of the surface $\{ (u,v,h(u,v)): \, (u,v) \in B(0,\varepsilon) \}$ with Fourier transform of the surface-carried measure given by
\begin{equation*}
\mu^\vee (x) = \int e^{i (x_1 u + x_2 v + x_3 h(u,v))} a(u,v) \sqrt{1+|\nabla h(u,v)|^2} du dv
\end{equation*}
with $a \in C^\infty_c(B(0,\varepsilon))$. We write $\beta(u,v) = a(u,v) \sqrt{1+|\nabla h(u,v)|^2}$.
 Note that for $|(x_1,x_2)| \geq 10 \varepsilon |x_3|$ we find by non-stationary phase estimates
\begin{equation*}
|\mu^\vee(x)| \leq C_N (1+|x|)^{-N}
\end{equation*}
for any $N \geq 1$ with $C_N=C_N(\text{supp} (\beta), \| \beta \|_{C^N})$. To estimate the non-trivial contribution $|(x_1,x_2)| < 10 \varepsilon |x_3|$, we let $x_3 = \lambda \geq 1$ and consider
\begin{equation*}
I_{\Phi}(\lambda) = \int e^{i \lambda \Phi(u,v)} \beta(u,v) du dv, \quad \Phi(u,v) = \big( \frac{x_1}{x_3} u + \frac{x_2}{x_3} v + h(u,v) \big).
\end{equation*}
In the next section we derive the bound claimed in Theorem \ref{thm:OscillatoryIntegralEstimate}
\begin{equation}
|I_{\Phi}(\lambda)| \leq C \lambda^{-\frac{1}{2}-\frac{1}{k}}
\end{equation}
with $C$ uniform in $|x_i| \leq 10 \varepsilon \lambda$ and depending only on $\text{supp}(\beta)$ and $\| \beta \|_{C^N}$ for fixed $N$.


\section{Proof of the oscillatory integral estimate}

We note that for any $|u| \leq \varepsilon$, by monotonicity of $\partial_v \Phi(u,v)$ there is $v^*$, $|v^*| \leq 10 \varepsilon$ such that
\begin{equation*}
\partial_v \Phi(u,v^*) = 0, \quad \partial_{vv} \Phi(u,v^*) = 2 - \mathcal{O}(\varepsilon).
\end{equation*}
Choosing $\varepsilon$ small enough, the implicit function theorem and the hypothesis on $f$ yield that there is exactly one smooth solution $\psi(u)$ such that
\begin{equation*}
\partial_v \Phi(u,\psi(u)) = 0.
\end{equation*}
In the first step, we use the following one-dimensional variable-coefficient version of the Van der Corput lemma:
\begin{proposition}[{\cite[Corollary~1.1.3]{MR3645429}}]
\label{prop:VariableCoefficientVdC}
Let
\begin{equation*}
\tilde{a}(u,\lambda) = e^{-i \lambda \Phi(u,\psi(u))} \int_{\R} e^{i \lambda \Phi(u,v)} a(\lambda,u,v)) dv.
\end{equation*}
Suppose that
\begin{equation*}
|\partial_\lambda^m \partial_u^{n_1} \partial_v^{n_2} a(\lambda,u,v)| \leq (1+\lambda)^{-m} d_{m,n_1,n_2}.
\end{equation*}
Then the following estimate holds,
\begin{equation}
\label{eq:VariableCoefficientVdC}
|\partial_\lambda^j \partial_u^\beta \tilde{a}(u,\lambda)| \leq c_{j,\beta} \lambda^{-\frac{1}{2}-j}.
\end{equation}
For fixed $j$ and $\beta$, $c_{j,\beta}$ depends only on the lower bound of $\Phi_{vv}$, finitely many $d_{m,n_1,n_2}$, and $|\text{supp}(a)|$.
\end{proposition}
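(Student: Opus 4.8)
The plan is to establish \eqref{eq:VariableCoefficientVdC} by induction on $j+\beta$, using only the one-dimensional Van der Corput lemma plus a single structural observation about how powers of $\lambda$ enter when $\tilde a$ is differentiated. First I would fix the $v$-interval containing $\mathrm{supp}_v(a)$, on which (by the hypotheses on $f$, with $\varepsilon$ small) $\Phi_{vv}\ge 2-\mathcal{O}(\varepsilon)>1$; this is precisely the lower bound the constant is allowed to depend on, and it makes $\Phi(u,\cdot)$ uniformly convex with $v=\psi(u)$ its unique critical point. By Hadamard's lemma I would write
\begin{equation*}
\Phi_v(u,v)=(v-\psi(u))\,G(u,v),\qquad \Phi(u,v)-\Phi(u,\psi(u))=(v-\psi(u))^2 H(u,v),
\end{equation*}
with $G,H$ smooth, $\lambda$-independent, $G\ge 1$, and $\psi,G,H$ having all derivatives bounded in terms of $\|\Phi\|_{C^N}$ and the convexity constant; in particular $v-\psi(u)=\Phi_v(u,v)/G(u,v)$ on this interval. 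The only analytic input is the second-derivative Van der Corput estimate with amplitude, $\big|\int_{\R} e^{i\lambda\Phi(u,v)}c(v)\,dv\big|\le C\lambda^{-1/2}(\|c\|_{L^\infty}+\|c'\|_{L^1})$ with $C$ absolute and $\|c'\|_{L^1}\le|\mathrm{supp}(c)|\,\|c'\|_{L^\infty}$; this already gives the case $j=\beta=0$.

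Next I would show that $u$-derivatives are ``neutral''. Differentiating $\tilde a$ in $u$, the prefactor contributes $-i\lambda\,\partial_u[\Phi(u,\psi(u))]=-i\lambda\,\Phi_u(u,\psi(u))$ (using $\Phi_v(u,\psi(u))=0$) while the inner exponential contributes $i\lambda\,\Phi_u(u,v)$, so every factor of $\lambda$ appears only through $i\lambda(\Phi_u(u,v)-\Phi_u(u,\psi(u)))a=i\lambda(v-\psi(u))g(u,v)a$ for a smooth bounded $g$ (Hadamard again). Using $(v-\psi(u))=\Phi_v/G$ and $\lambda\Phi_v e^{i\lambda\Phi}=\tfrac1i\partial_v(e^{i\lambda\Phi})$, one integration by parts in $v$ kills the $\lambda$ and replaces the amplitude by $\partial_v(g\,a/G)$, which satisfies amplitude bounds of the same type (the $\partial_\lambda$-bounds are inherited from $a$, since $\psi,g,G$ do not depend on $\lambda$). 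Hence $\partial_u\tilde a=e^{-i\lambda\Phi(u,\psi(u))}\sum_k\int_{\R} e^{i\lambda\Phi}c_k\,dv$ with bounded compactly supported $c_k$, and the basic estimate gives $|\partial_u\tilde a|\lesssim\lambda^{-1/2}$; iterating, $|\partial_u^\beta\tilde a|\lesssim\lambda^{-1/2}$.

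Then I would show each $\lambda$-derivative gains $\lambda^{-1}$. Differentiating in $\lambda$, the two exponentials combine to put the factor $i(\Phi(u,v)-\Phi(u,\psi(u)))a=i(v-\psi(u))^2Ha=i(v-\psi(u))(H/G)\Phi_v a$ in front of $e^{i\lambda\Phi}$; via $\Phi_v e^{i\lambda\Phi}=\tfrac1{i\lambda}\partial_v(e^{i\lambda\Phi})$ and one integration by parts this term acquires a genuine $\lambda^{-1}$ with new amplitude $\partial_v((v-\psi)(H/G)a)$, while the term where $\partial_\lambda$ hits $a$ carries an extra $(1+\lambda)^{-1}$ by hypothesis; the basic estimate then gives $|\partial_\lambda\tilde a|\lesssim\lambda^{-3/2}$. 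Since these new amplitudes again satisfy the hypotheses of the Proposition (with larger constants), a $\partial_u$ stays neutral and a further $\partial_\lambda$ again produces $\lambda^{-1}$, so an induction on $j+\beta$ yields $|\partial_\lambda^j\partial_u^\beta\tilde a|\lesssim\lambda^{-1/2-j}$ with the stated constant dependence.

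The main point is just the cancellation used above: the powers of $\lambda$ coming from differentiating the oscillatory factors always appear against $\Phi_u(u,v)-\Phi_u(u,\psi(u))$ (vanishing to order one on the critical curve) or $\Phi(u,v)-\Phi(u,\psi(u))$ (to order two), hence against one or two powers of $\Phi_v$, and after one integration by parts a power of $\Phi_v$ either cancels a $\lambda$ (for $\partial_u$) or manufactures $\lambda^{-1}$ (for $\partial_\lambda$). I do not expect a genuine obstacle here; the only thing requiring care is tracking the dependence of the constants through the induction, which is what the cited \cite{MR3645429} records. An equivalent and slightly slicker route is to first perform the $\lambda$-independent Morse change of variables $v=\gamma(u,w)$ with $\Phi(u,\gamma(u,w))=\Phi(u,\psi(u))+w^2$, reducing $\tilde a$ to $\int_{\R} e^{i\lambda w^2}b(\lambda,u,w)\,dw$ with $b$ inheriting the amplitude bounds; on this model the same integration-by-parts scheme (now using $we^{i\lambda w^2}=\tfrac1{2i\lambda}\partial_w e^{i\lambda w^2}$) is entirely explicit, at the cost of burying the constants in the normalization step.
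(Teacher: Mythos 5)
Your argument is correct, and it is worth noting at the outset that the paper does not actually prove this proposition: it is quoted verbatim from Sogge's Corollary~1.1.3, whose proof reduces to the quadratic model phase via a $\lambda$-independent (Morse-type) change of variables $v=\gamma(u,w)$ with $\Phi(u,\gamma(u,w))=\Phi(u,\psi(u))+w^2$ and then invokes the explicit stationary-phase computation there --- i.e.\ exactly the ``slicker route'' you mention in your final sentence. Your primary route is genuinely different and arguably closer in spirit to the rest of the paper, since it uses only the $k=2$ one-dimensional Van der Corput bound: the Hadamard factorizations $\Phi_v=(v-\psi)G$ and $\Phi-\Phi(\cdot,\psi)=(v-\psi)^2H$ show that every power of $\lambda$ produced by differentiating the two exponentials appears multiplied by at least one factor of $\Phi_v$, so a single integration by parts in $v$ either cancels that $\lambda$ (for $\partial_u$, which is therefore neutral) or manufactures a genuine $\lambda^{-1}$ (for $\partial_\lambda$). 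The induction closes because each integration by parts replaces the amplitude by $v$-derivatives of products of $a$ with $\lambda$-independent smooth functions ($g/G$, $(v-\psi)H/G$, \dots), so the symbol hypothesis on $a$ is inherited with finitely many more of the $d_{m,n_1,n_2}$; stating the inductive claim as ``a finite sum of $\lambda^{-j_1}$ times admissible oscillatory integrals with $j_1$ plus the symbol order of the new amplitude at least $j$'' makes the bookkeeping airtight. The one caveat worth recording is that your constants (like Sogge's) necessarily depend also on finitely many derivatives of $\Phi$ --- through $\psi$, $G$, $H$ and the upper bounds needed for $g=\int_0^1\Phi_{uv}\,dt$ --- and not only on the lower bound of $\Phi_{vv}$; the proposition as stated suppresses this dependence, which is harmless in the application because $\Phi$ there is a uniformly controlled perturbation of $v^2\pm u^k$.
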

In the second step, we use the following instance of the one-dimensional Van der Corput lemma:
\begin{proposition}[{\cite[p.~334]{MR1232192}}]
\label{prop:OneDimensionalVdC}
Suppose $\phi$ is real-valued and smooth in $(a,b)$ and that $|\phi^{(k)}(x)| \geq 1$ for all $x \in (a,b)$ with $k \geq 2$. Then
\begin{equation*}
\big| \int_a^b e^{i\lambda \phi(x)} \psi(x) dx \big| \leq c_k \lambda^{-\frac{1}{k}} \big[ \,|\psi(b)| + \int_{a}^b |\psi'(x)| dx \big]
\end{equation*}
with $c_k$ independent of $\lambda$ and $\phi$.
\end{proposition}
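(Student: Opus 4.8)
The plan is to reduce the statement to the case of constant amplitude, $\psi\equiv 1$, and then to recover a general smooth amplitude by a single integration by parts; the constant-amplitude estimate is proved by induction on $k$. Concretely, set $F(t)=\int_a^t e^{i\lambda\phi(x)}\,dx$. The heart of the matter is the uniform bound
\[
\sup_{a\le t\le b}|F(t)|\le c_k\,\lambda^{-1/k},
\]
with $c_k$ depending on $k$ \emph{alone} — in particular independent of $\phi$ and of the subinterval, so that it may be reapplied to subintervals generated during the induction. Granting this, the full statement follows immediately: since $F(a)=0$, integration by parts gives
\[
\int_a^b e^{i\lambda\phi(x)}\psi(x)\,dx=F(b)\psi(b)-\int_a^b F(t)\psi'(t)\,dt,
\]
and inserting the uniform bound on $F$ yields the asserted estimate $c_k\lambda^{-1/k}\big(|\psi(b)|+\int_a^b|\psi'(x)|\,dx\big)$.

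For the base case $k=2$: since $\phi''$ is continuous and $|\phi''|\ge 1$ on $(a,b)$, it has a fixed sign, so $\phi'$ is strictly monotone. Hence for any $\delta>0$ the set $B_\delta=\{x:|\phi'(x)|<\delta\}$ is a single subinterval of length at most $2\delta$, on which I bound the integral trivially by its length. On each of the at most two components $I$ of $(a,t)\setminus B_\delta$ one has $|\phi'|\ge\delta$ with $\phi'$ of constant sign, so $1/\phi'$ is monotone there with $|1/\phi'|\le1/\delta$; integrating by parts once, the boundary terms are $O((\lambda\delta)^{-1})$ and the remaining term is $\lambda^{-1}$ times the total variation of $1/\phi'$ on $I$, again $O((\lambda\delta)^{-1})$. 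Summing the pieces and choosing $\delta=\lambda^{-1/2}$ gives the bound for $k=2$.

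For the inductive step $k\ge3$, assume the uniform bound with exponent $1/(k-1)$. Since $|\phi^{(k)}|\ge1$, the function $\phi^{(k-1)}$ is strictly monotone, so $\{x:|\phi^{(k-1)}(x)|<\lambda^{-1/k}\}$ is again a single subinterval, of length at most $2\lambda^{-1/k}$, where the integral is estimated trivially by its length. On each component $I$ of the complement we have $|\phi^{(k-1)}|\ge\lambda^{-1/k}$ on $I$; writing $\Psi=\lambda^{1/k}\phi$ and $\mu=\lambda^{1-1/k}$, so that $\lambda\phi=\mu\Psi$ and $|\Psi^{(k-1)}|\ge1$ on $I$, the induction hypothesis gives $\big|\int_I e^{i\lambda\phi}\big|=\big|\int_I e^{i\mu\Psi}\big|\le c_{k-1}\mu^{-1/(k-1)}=c_{k-1}\lambda^{-1/k}$, since $\mu^{-1/(k-1)}=\lambda^{-(1-1/k)/(k-1)}=\lambda^{-1/k}$. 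Adding the at most three pieces proves the bound for $k$ with $c_k\le 2+2c_{k-1}$.

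The one genuinely delicate point is precisely this uniformity of $c_k$: the induction closes only because the constant is independent of the interval, so that it can be reapplied on the two subintervals produced at each stage, and the rescaling $\phi\mapsto\lambda^{1/k}\phi$ is exactly what converts the lower bound $|\phi^{(k-1)}|\ge\lambda^{-1/k}$ into the normalized hypothesis $|\Psi^{(k-1)}|\ge1$ at the reduced frequency $\mu=\lambda^{1-1/k}$. The measure bound for the bad set via monotonicity of $\phi^{(k-1)}$, the single integration by parts in the base case, and the final integration by parts passing from constant to variable amplitude are all routine. (This is the classical argument, reproduced for completeness.)
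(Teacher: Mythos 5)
Your proof is correct, and it is essentially the classical argument from the cited source \cite[p.~334]{MR1232192}: the paper itself gives no proof of this proposition, deferring entirely to Stein, whose proof is exactly the induction on $k$ for the constant-amplitude bound (splitting off the set where $|\phi^{(k-1)}|$ is small, using monotonicity of $\phi^{(k-1)}$ to control its measure) followed by a single integration by parts against $F(t)=\int_a^t e^{i\lambda\phi}$ to admit the amplitude $\psi$. Your rescaling $\Psi=\lambda^{1/k}\phi$, $\mu=\lambda^{1-1/k}$ and the exponent bookkeeping $(1-1/k)/(k-1)=1/k$ check out, as does the uniformity of $c_k$ in the interval, which is indeed the point that makes the induction close.
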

By this, we can apply the one-dimensional Van der Corput lemma to
\begin{equation*}
I_{\Phi}(\lambda) = \int_{\R} e^{i \lambda \Phi(u,\psi(u))} \tilde{a}(u,\lambda) du.
\end{equation*}
By assumption we have $|\Phi^{(k)} (u,\psi(u))| \geq c(k) > 0$ for $u \in \text{supp}(a)$ and by Proposition \ref{prop:VariableCoefficientVdC} we know that $\tilde{a}$ satisfies \eqref{eq:VariableCoefficientVdC} with implicit constant only depending on $\text{supp}(\beta)$ and $\| \beta \|_{C^N}$. This yields the desired estimate \eqref{eq:Decay} with the claimed dependence.
\hfill $\Box$
\begin{remark}
\label{rem:Stability}
Regarding the smooth dependence, we record the following consequence. Suppose that we have a family of surfaces $(p^{-1}(\{a\}))_{a \in (-\varepsilon,\varepsilon)}$ given as level sets of a smooth and regular function $p$. With the Gauss map smoothly depending on $a$, we find that $\mathcal{C}$ is stable under smooth perturbations. Moreover, the decay is uniform in $a$ as follows from the construction: The graph parametrisation, as well as the change of variables in the proof of the decay estimate, can be chosen to be locally smoothly varying in $a$.
\end{remark}
\section{Applications}

In this section we apply the decay estimate to derive Strichartz estimates for homogeneous phase functions and resolvent estimates for elliptic operators.
\subsection{Strichartz estimates}
Consider the solution to
\begin{equation}
\label{eq:DispersiveEquation}
\left\{ \begin{array}{cl}
i \partial_t u + p(D) u &= 0, \quad (t,x) \in \R \times \R^2, \\
u(0) &= \beta(D) u_0
\end{array} \right.
\end{equation}
with $\beta \in C^\infty_c(B(0,4) \backslash B(0,1/4))$, $\sum_{k \in \Z} \beta(2^{-k} \xi) \equiv 1$ for $\xi \neq 0$.
We can write
\begin{equation*}
u(t,x) = \int e^{i (x\cdot\xi + t p(\xi))} \beta(\xi) \hat{u}_0(\xi) d\xi = G_t * u_0(x).
\end{equation*}

In the generic case the surface
\begin{equation*}
\{ (\xi, p(\xi)) \, : \, \xi \in \text{supp}(\beta) \}
\end{equation*}
has at most cusp singularities. In this case, the decay estimate
\begin{equation}
\label{eq:KernelEstimate}
|G_t(x)| \lesssim (1+|t|)^{-\frac{3}{4}}
\end{equation}
is a consequence of Theorem \ref{thm:OscillatoryIntegralEstimate}. The homogeneous Strichartz estimates
\begin{equation}
\label{eq:GenericStrichartzEstimates}
\| u(t) \|_{L^p(\R;L^q(\R^2))} \lesssim \| u_0 \|_{L^2(\R^2)}
\end{equation}
with $p,q \geq 2$, $\frac{1}{p} + \frac{3}{4q} \leq \frac{3}{8}$ are a consequence of \cite[Theorem~1.2]{MR1646048}: Consider the family of operators $(U(t))_{t \in \R}$ on $L^2(\R^2)$, which satisfy the dispersive estimate with decay parameter $\sigma = \frac{3}{4}$ by \eqref{eq:KernelEstimate} and the energy estimate by Plancherel's theorem. Inhomogeneous estimates follow likewise. 

We turn to the proof of Theorem \ref{thm:GenericStrichartzEstimates}: If the dispersion relation is homogeneous, then cusps cannot occur and in the generic case, folds are the only singularities of the characteristic surface. Indeed, with the notations of Section \ref{section:Preliminaries}, it follows that from \eqref{eq:DerivativeGaussMap} and the homogeneity of $p(\xi)$ that if $(dN(V))_p = 0$ at some point $p$, then $(dN(V))_{\lambda p} = 0$ at all points $\lambda p$, $\lambda>0$.
This would contradict the linear vanishing of $dN(V)_p$, and hence no cusp can occur.

Now let $(P_N)_{N \in 2^{\mathbb{Z}}}$ denote a homogeneous Littlewood-Paley decomposition and $p,q$ as in Theorem \ref{thm:GenericStrichartzEstimates}. By the square function estimate and $p,q \geq 2$, we find
\begin{equation*}
\|u\|_{L^p(\R;L^q(\R^2))}\lesssim\| \big( \sum_{N \in 2^{\Z}} |P_N u(t,x) |^2 \big)^{\frac{1}{2}} \|_{L^p(\R;L^q(\R^2))} \leq \big( \sum_N \| P_N u(t,x) \|^2_{L^p(\R;L^q(\R^2))} \big)^{\frac{1}{2}}.
\end{equation*}
We use scaling $x \to Nx$, $t \to N^\mu t$, $\xi \to \frac{\xi}{N}$, and \eqref{eq:GenericStrichartzEstimates} to compute
\begin{equation*}
\| P_N u \|_{L^p(\R;L^q(\R^2))} = N^{-\frac{\mu}{p} - \frac{2}{q}} \| P_0 \tilde{u} \|_{L^p(\R;L^q(\R^2))} 
\lesssim N^{-\frac{\mu}{p} - \frac{2}{q} + 1} \| P_N u_0 \|_{L^2(\R^2)}.
\end{equation*}
The proof is concluded by almost orthogonality of the frequency localized pieces.
\hfill $\Box$

We turn to the proof for perturbations:
\begin{proof}[Proof of Theorem \ref{thm:LowerOrderPerturbation}]
Note that the dyadic surfaces $\{(N\xi, p_\mu(N \xi) + p_{\nu}(N\xi)) : \, |\xi| \sim 1 \}$ become after rescaling as previously
\begin{equation*}
(\xi, p_\mu(\xi) + \frac{p_{\nu}(N \xi)}{N^\mu}) =: (\xi,p_N(\xi)).
\end{equation*}
Hence, we have by Remark \ref{rem:Stability} $\{(\xi,p_N(\xi)) : |\xi| \sim 1 \} \in \mathcal{C}$ with uniform dispersive estimate
\begin{equation*}
\| e^{it p_N(D)} \beta(D) u_0 \|_{L^\infty} \lesssim (1+|t|)^{-\frac{3}{4}} \| u_0 \|_{L^1},
\end{equation*}
where we have to choose $N$ large enough in dependence of $p_\nu$ and $p_\mu$. We then obtain by the arguments of the previous proof
\begin{equation*}
\| e^{it p(D)} P_{\gtrsim N} u_0 \|_{L^p([0,T],L^q(\R^2))} \lesssim  \| u_0 \|_{H^s}.
\end{equation*}
The low frequencies are estimated by H\"older's and Bernstein's inequality:
\begin{equation*}
\| e^{it p(D)} P_{\lesssim N} u_0 \|_{L^p([0,T],L^q(\R^2))} \lesssim_{T,N} \| u_0 \|_{H^s}.
\end{equation*}
The proof is complete by taking the two previous estimates together.
\end{proof}

\subsection{Resolvent estimates}
Next we prove Theorem \ref{thm:LimitingAbsorptionPrinciple}. The method of proof is well-known and for details we refer e.g. to \cite{MandelSchippa2021}. In the first step, a Fourier restriction--extension theorem for surfaces $\Sigma_a$, $a \in (-\delta_0,\delta_0)$ is derived. By Remark \ref{rem:Stability}, we can suppose that $\Sigma_a:=\{p(\xi)=a\} \in \mathcal{C}$ for $a \in (-\delta_0,\delta_0)$ with uniform decay of the Fourier transform of the surface measure. We prove strong bounds
\begin{equation}
\label{eq:StrongBoundsFourierRestrictionExtension}
\| \int_{\R^3} e^{ix\cdot\xi} \delta_{\Sigma_a}(\xi) \beta(\xi) \hat{f}(\xi) d\xi \|_{L^q(\R^3)} \lesssim \| f \|_{L^p(\R^3)}
\end{equation}
within a pentagonal region. Here $\beta \in C^\infty_c$ localizes to a suitable neighbourhood of $\{K=0\}$ in $\big( \Sigma_a \big)_{a \in (-\delta_0,\delta_0)}$.
Away from $\{ K=0\}$, \cite[Theorem~1.3]{MandelSchippa2021} provides better estimates for $d=3$, $k=2$. On part of the boundary of the pentagonal region, we show weak bounds
\begin{align}
\label{eq:WeakBoundI}
\| \int_{\R^3} e^{ix\cdot\xi} \delta_{\Sigma_a}(\xi) \beta(\xi) \hat{f}(\xi) d\xi \|_{L^{q,\infty}(\R^3)} \lesssim \| f \|_{L^p(\R^3)} \\
\label{eq:WeakBoundII}
\| \int_{\R^3} e^{ix\cdot\xi} \delta_{\Sigma_a}(\xi) \beta(\xi) \hat{f}(\xi) d\xi \|_{L^q(\R^3)} \lesssim \| f \|_{L^{p,1}(\R^3)},
\end{align}
and lastly, restricted weak bounds
\begin{equation}
\label{eq:RestrictedWeakBound}
\| \int_{\R^3} e^{ix\cdot\xi} \delta_{\Sigma_a}(\xi) \beta(\xi) \hat{f}(\xi) d\xi \|_{L^{q,\infty}(\R^3)} \lesssim \| f \|_{L^{p,1}(\R^3)}
\end{equation}
at its inner endpoints. We refer to the figure below for illustration.
For $X,Y \in [0,1]^2$ we write $[X,Y] = \{ Z: \, \exists \lambda \in [0,1]: \, Z = \lambda X + (1-\lambda) Y \}$ and correspondingly $(X,Y)$, $(X,Y]$, etc.

\begin{proposition}
\label{prop:FourierRestrictionExtension}
Let $p:\R^3 \to \R$ be $\alpha$-elliptic with $\delta_0 > 0$ such that $(\Sigma_a = \{ p(\xi) = a \})_{a \in (-\delta_0,\delta_0)} \subseteq \mathcal{C}$. 
Then, we find \eqref{eq:StrongBoundsFourierRestrictionExtension} to hold for $(\frac{1}{p},\frac{1}{q}) \in [0,1]^2$ provided that
\begin{equation*}
\frac{1}{p} > \frac{7}{10}, \quad \frac{1}{q} < \frac{3}{10}, \quad \frac{1}{p} - \frac{1}{q} \geq \frac{4}{7}.
\end{equation*}
Let
\begin{equation*}
B= \big( \frac{7}{10}, \frac{9}{70} \big), \; C = \big( \frac{7}{10}, 0 \big), \quad B' = \big( \frac{61}{70}, \frac{3}{10} \big), \; C' = \big(1, \frac{3}{10} \big).
\end{equation*}
Then \eqref{eq:WeakBoundI} holds for $(1/p,1/q) \in (B',C']$, \eqref{eq:WeakBoundII} for $(1/p,1/q) \in (B,C]$, and \eqref{eq:RestrictedWeakBound} for $(1/p,1/q) \in \{B,B' \}$.
\end{proposition}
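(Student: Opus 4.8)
The plan is to establish the Fourier restriction--extension estimate \eqref{eq:StrongBoundsFourierRestrictionExtension} by interpolating a handful of endpoint bounds, all of which are consequences of the Fourier decay $|(\delta_{\Sigma_a}\beta)^\vee(x)| \lesssim (1+|x|)^{-3/4}$ established in Theorem \ref{thm:OscillatoryIntegralEstimate} (uniformly in $a$, by Remark \ref{rem:Stability}). Write $T_a f = \int_{\R^3} e^{ix\cdot\xi} \delta_{\Sigma_a}(\xi)\beta(\xi)\hat f(\xi)\, d\xi$. The operator $T_a^* T_a$ is convolution with $(\delta_{\Sigma_a}\beta)^\vee$, which by the decay estimate lies in weak-$L^{4/3}$ (since $|x|^{-3/4}$ on $\R^3$ is in $L^{4/3,\infty}$ and $\beta$ is compactly supported so the kernel is bounded near the origin and rapidly decaying off a neighborhood of the relevant cone direction). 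Hence by the Hardy--Littlewood--Sobolev inequality in its weak form, together with real interpolation, $T_a^*T_a : L^{r} \to L^{r'}$ for $\frac 1r - \frac 1{r'} = \frac 14$, i.e.\ $r = 8/7$; a $TT^*$ argument then gives $T_a : L^{8/7} \to L^2$ and dually $T_a : L^2 \to L^8$. This single exponent pair, on the line $\frac 1p - \frac 1q = \frac 14$, is the diagonal input; note $(7/8,1/8)$ has $\frac1p - \frac1q = 3/4 > 4/7$ so it sits strictly inside the claimed region.

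Next I would bring in the second, "trivial" endpoint coming from the measure itself: since $\delta_{\Sigma_a}\beta$ is a bounded measure with bounded density on a fixed compact surface, $T_a : L^1 \to L^\infty$ trivially (the kernel is bounded), and also $T_a : L^2 \to L^2$ fails, but the relevant cheap bound is that the extension operator maps $L^1(\R^3) \to L^\infty(\R^3)$ with norm $\lesssim \|\beta\|_\infty$. Interpolating this $(1,0)$ point with the $TT^*$-point $(7/8,1/8)$ and its dual produces a triangle of strong $L^p \to L^q$ bounds; the claimed pentagon is the convex region cut out by the three inequalities $\frac1p > \frac 7{10}$, $\frac1q < \frac 3{10}$, $\frac1p - \frac1q \ge \frac 47$, whose vertices are precisely $B = (\frac7{10},\frac9{70})$, $C = (\frac7{10},0)$, $B' = (\frac{61}{70},\frac3{10})$, $C' = (1,\frac3{10})$ together with the corner at $(1,0)$. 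One checks that $B,B',C,C'$ all lie on the convex hull of $(1,0)$, $(7/8,1/8)$, and $(1/8 \text{ dual point})$ — more precisely, the Riesz--Thorin interpolation of $L^1\to L^\infty$ with $L^{8/7}\to L^8$ covers the segment from $(1,0)$ to $(7/8,1/8)$, and one also interpolates with the reflected bound, so after taking the convex hull and intersecting with $[0,1]^2$ one obtains exactly the open pentagon where the strict inequalities hold.

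For the boundary bounds \eqref{eq:WeakBoundI}, \eqref{eq:WeakBoundII} and the restricted weak bound \eqref{eq:RestrictedWeakBound}, the point is that the Hardy--Littlewood--Sobolev inequality $\|\,|x|^{-3/4} * g\,\|_{L^{q}} \lesssim \|g\|_{L^p}$ with $\frac1p - \frac1q = \frac14$ holds in the strong form only in the open range; at the endpoint $\frac1p - \frac1q = \frac14$ with $p = 8/7$ (and more generally along the critical HLS line) one has instead the weak-type bound $L^p \to L^{q,\infty}$ and the dual $L^{p,1}\to L^q$, and at both endpoints simultaneously the restricted weak-type bound. Transplanting these through the $TT^*$ identity and Lorentz-space duality gives \eqref{eq:WeakBoundI} on $(B',C']$ and \eqref{eq:WeakBoundII} on $(B,C]$, with the sharp endpoints $B$ and $B'$ carrying only the restricted weak bound \eqref{eq:RestrictedWeakBound}. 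The edges $(B',C']$ and $(B,C]$ are exactly the portions of the lines $\frac1q = \frac3{10}$ and $\frac1p = \frac7{10}$ bounding the pentagon, and I would verify the endpoint arithmetic: $B$ and $B'$ lie on the critical line $\frac1p - \frac1q = \frac47$ (indeed $\frac7{10} - \frac9{70} = \frac{49-9}{70} = \frac{40}{70} = \frac47$ and $\frac{61}{70} - \frac3{10} = \frac{61-21}{70} = \frac{40}{70} = \frac47$), which is where strong bounds degenerate to restricted weak ones.

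The main obstacle I expect is not any single estimate but the bookkeeping: one must carefully track which of the three HLS-type conclusions (strong, weak, restricted weak) survives at each vertex and edge of the pentagon, and check that the interpolation region obtained from the finitely many endpoint inputs really equals the stated pentagon rather than a proper subset — in particular that the corner $(1,0)$ contributes enough to reach the full edges $[C,C']$ and to pin down that the bounds are \emph{strong} in the open pentagon and only \emph{restricted weak} at $B, B'$. Also, a small technical point to dispatch first is that the convolution kernel $(\delta_{\Sigma_a}\beta)^\vee$ is globally controlled by $(1+|x|)^{-3/4}$ only in the directions normal to the cone; in the remaining directions it is Schwartz by non-stationary phase (as noted in Section \ref{section:Preliminaries}), so it is dominated by $(1+|x|)^{-3/4}$ everywhere and hence lies in $L^{4/3,\infty}(\R^3)$, which is all the HLS input requires.
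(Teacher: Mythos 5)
There is a genuine gap, and it is structural rather than a bookkeeping issue. First, the Lorentz--space claim is off: on $\R^3$ a function decaying like $(1+|x|)^{-3/4}$ lies in $L^{4,\infty}(\R^3)$, not $L^{4/3,\infty}$ (the exponent is $n/a = 3/(3/4) = 4$, and global boundedness only improves this to $r\geq 4$). Feeding $L^{4,\infty}$ into weak Young/HLS yields convolution bounds on the line $\frac{1}{p}-\frac{1}{q}=1-\frac14=\frac34$, which does contain the self-dual point $(7/8,1/8)$ you quote, but this line sits \emph{strictly inside} the pentagon: you cannot reach the critical line $\frac{1}{p}-\frac{1}{q}=\frac47$ on which $B$ and $B'$ lie and on which the strong bounds degenerate to restricted weak ones. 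The HLS route uses only the pointwise kernel decay and throws away the fact that the symbol $\delta_{\Sigma_a}\beta$ has bounded total mass, which is exactly the second input in the Stein--Tomas argument. Carrying out Stein--Tomas with decay $\sigma=\frac34$ gives the restriction bound $L^{14/11}(\R^3)\to L^2(d\sigma)$ and hence, by $TT^*$, the self-dual point $(\frac{11}{14},\frac{3}{14})$, which you can check is the midpoint of $[B,B']$ and the unique intersection of the pentagon with the duality line $\frac1p+\frac1q=1$. Without that input the exponent $\frac47$ is simply out of reach.

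Second, even granting the point $(\frac{11}{14},\frac{3}{14})$, the interpolation step cannot produce the pentagon. Both $(7/8,1/8)$ and $(\frac{11}{14},\frac{3}{14})$ lie on the duality line and equal their own duals, so ``interpolating with the reflected bound'' does not furnish a third non-collinear point; Riesz--Thorin between $(1,0)$ and the HLS family on $\frac1p-\frac1q=\frac34$ only yields the triangle $\{\frac1p-\frac1q\geq\frac34\}\cap[0,1]^2$, a proper subset of the region claimed. The constraints $\frac1p>\frac{7}{10}$ and $\frac1q<\frac{3}{10}$, together with the off-duality vertices $C=(\frac{7}{10},0)$ and $C'=(1,\frac{3}{10})$, never appear in your argument and cannot be extracted from a self-dual estimate plus the trivial $L^1\to L^\infty$ bound. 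In fact the paper does not prove Proposition~\ref{prop:FourierRestrictionExtension} by this route or even in detail: it invokes the restriction--extension machinery of \cite{MandelSchippa2021} (see also \cite{MR4076079}), where the off-diagonal part of the pentagon and the endpoint Lorentz bounds \eqref{eq:WeakBoundI}--\eqref{eq:RestrictedWeakBound} are obtained from Bochner--Riesz estimates of negative index adapted to the decay exponent $k=\frac32$, not from the scalar HLS inequality. Your arithmetic verification that $B$ and $B'$ lie on $\frac1p-\frac1q=\frac47$ is a useful consistency check, but it does not substitute for an argument reaching that line.
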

\begin{center}
\begin{figure}
\label{fig:FourierRestrictionExtensionEstimate}
\begin{tikzpicture}[scale=0.5]
\draw[->] (0,0) -- (13,0); \draw[->] (0,0) -- (0,13);
\draw (0,0) --(12,12); \draw(0,12) -- (12,12); \draw (12,12) -- (12,0);
\coordinate (E) at (6,3);
\coordinate [label=left:$\frac{3}{14}$] (EX) at (0,3);
\coordinate [label=above:$B$] (B) at (7,2.5);
\coordinate [label=above:$B'$] (B') at (9.5,5);
\coordinate [label=below:$C$] (C) at (7,0);
\coordinate [label=right:$C'$] (C') at (12,5);
\coordinate [label=left:$\frac{1}{2}$] (Y) at (0,6);
\coordinate [label=left:$1$] (YY) at (0,12);
\coordinate [label=below:$\frac{1}{2}$] (X) at (6,0);
\coordinate [label=below:$1$] (XX) at (12,0);
\coordinate [label=left:$\frac{1}{q}$] (YC) at (0,9);
\coordinate [label=below:$\frac{1}{p}$] (XC) at (9,0);

\draw [dotted] (EX) -- (E); \draw [dotted] (E) -- (X); \draw [dotted] (E) -- (XX); \draw [dotted] (XX) -- (YY);
\draw [help lines] (C) -- (B); \draw (B) -- (B');
\draw [help lines] (B') -- (C');

\foreach \point in {(C),(C'),(B),(B')}
	\fill [black, opacity = 1] \point circle (3pt);

\end{tikzpicture}
\caption{Pentagonal region, within which strong $L^p$-$L^q$-Fourier restriction extension estimates hold.}
\end{figure}
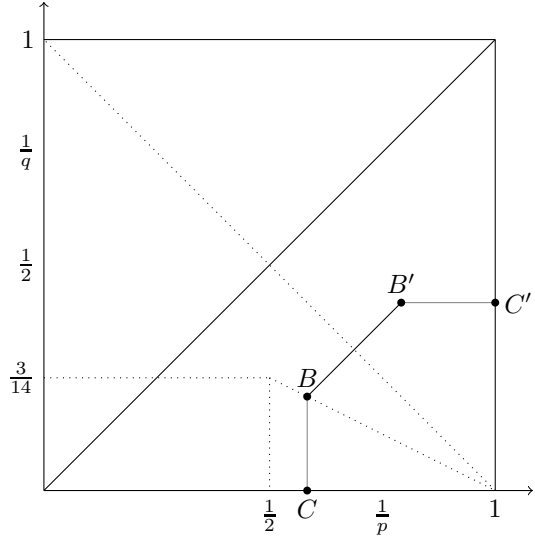
\end{center}

In the second step we foliate a neighbourhood $U$ of $\Sigma_0$ with level sets of $p$ to show bounds $\| A_\delta f \|_{L^q} \lesssim \| f \|_{L^p(\R^3)}$ for
\begin{equation}
\label{eq:SingularMultiplier}
A_\delta f(x) = \int_{\R^3} \frac{e^{ix\cdot\xi} \beta_1(\xi)}{p(\xi) + i \delta} \hat{f}(\xi) d\xi
\end{equation}
independent of $\delta$. Here, $p,q$ are as in Proposition \ref{prop:FourierRestrictionExtension} and $|p(\xi)| \leq \delta_0$ for $\xi \in \text{supp } (\beta_1)$ with $\Sigma_0 \subseteq \text{supp }(\beta_1)$. Away from the singular set, estimates for
\begin{equation}
\label{eq:ExteriorDomainMultiplier}
B_\delta f(x) = \int_{\R^3} \frac{e^{ix\cdot\xi} \beta_2(\xi)}{p(\xi) + i \delta} \hat{f}(\xi) d\xi
\end{equation}
with $\beta_1 + \beta_2 \equiv 1$ follow from Young's inequality and properties of the Bessel potential. The estimate of $\| B_\delta \|_{L^p \to L^q}$ depends on ellipticity.\\
The method of proof is well-known and detailed in \cite{MandelSchippa2021}; see also \cite{MR4076079,MR3545933,MR4153099} and references therein. Also note that the self-dual $L^p\to L^{p'}$ resolvent estimates follow from (in fact, are equivalent to) \eqref{eq:StrongBoundsFourierRestrictionExtension} by the abstract argument in \cite{cuenin2020spectral}. We omit the details to avoid repetition and turn to the conclusion of the proof of Theorem \ref{thm:LimitingAbsorptionPrinciple}, relying on Proposition \ref{prop:FourierRestrictionExtension}. The argument parallels \cite[Section~5.2]{MandelSchippa2021} very closely, to avoid repitition we shall be brief. Let $A_\delta$ and $B_\delta$ be as in \eqref{eq:SingularMultiplier} and \eqref{eq:ExteriorDomainMultiplier}. We start with the more difficult estimate of $A_\delta$. We show boundedness of $A_\delta:L^p(\R^3) \to L^q(\R^3)$ independently of $\delta$ with $p$, $q$ as in Proposition \ref{prop:FourierRestrictionExtension}. For this it is enough to show restricted weak type bounds
\begin{equation*}
\| A_\delta \|_{L^{q_0,\infty}} \lesssim \| f \|_{L^{p_0,1}}
\end{equation*}
for $(1/p_0,1/q_0) = (61/70,3/10)$ and the bounds
\begin{equation*}
\| A_\delta f \|_{L^q} \lesssim \| f \|_{L^{p,1}}
\end{equation*}
for $(1/p,1/q) \in ((61/70,3/10),(1,3/10)]$, since strong bounds for $A_\delta$ with $p,q$ as in Proposition \ref{prop:FourierRestrictionExtension} are recovered by interpolation and duality. As $\nabla p(\xi) \neq 0$ for $\xi \in \text{supp} (\beta_1)$ by construction, we can change to generalized polar coordinates. Let $\xi = \xi(p,q)$, where $p$ and $q$ are complementary coordinates.\\
Write
\begin{equation*}
A_\delta f(x) = \int \frac{e^{ix\cdot\xi} \beta_1(\xi)}{p(\xi) + i\delta} \hat{f}(\xi) d\xi = \int dp \int dq \frac{e^{ix\cdot\xi(p,q)} \beta(\xi(p,q)) h(p,q) \hat{f}(\xi(p,q))}{p + i \delta},
\end{equation*}
where $h$ denotes the Jacobian. We can suppose that $|\partial^\alpha h | \lesssim_\alpha 1$ choosing $\text{supp}( \beta)$ small enough. The expression is estimated as in \cite[Subsection~5.2]{MandelSchippa2021} by suitable decompositions in Fourier space and crucially depending on the Fourier restriction estimates for Proposition \ref{prop:FourierRestrictionExtension}; see \cite{MR4076079} for $p(\xi) = |\xi|^\alpha$. We write
\begin{equation*}
\frac{1}{p(\xi) + i \delta} = \frac{p(\xi)}{p^2(\xi) + \delta^2} - i \frac{\delta}{p^2(\xi) + \delta^2} = \mathfrak{R}(\xi) - i \mathfrak{I}(\xi).
\end{equation*}
As in \cite{MandelSchippa2021}, $\mathfrak{I}(D)$ is estimated by Minkowski's inequality and Fourier restriction--extension estimates, in the present context from Proposition \ref{prop:FourierRestrictionExtension}. The only difference in the estimate of $\mathfrak{R}(D)$ is that \cite[Lemma~5.1]{MandelSchippa2021} is applied for $k=\frac{3}{2}$. For details we refer to \cite[Section~4]{MandelSchippa2021}. This finishes the proof of the estimate for $A_\delta$.

For the estimate of $B_\delta$, we carry out a further decomposition in Fourier space: By ellipticity, there is $\alpha > 0$ and $R\geq 1$ such that
\begin{equation*}
|p(\xi)| \gtrsim |\xi|^\alpha
\end{equation*}
provided that $|\xi| \geq R$. Let $\beta_2(\xi) = \beta_{21}(\xi) + \beta_{22}(\xi)$ with $\beta_{21}, \beta_{22} \in C^\infty$ and $\beta_{22}(\xi) = 0 $ for $|\xi| \leq R$, $\beta_{22}(\xi) = 1$ for $|\xi| \geq 2R$.\\
We can estimate
\begin{equation*}
\| B_\delta (\beta_{21}(D) f) \|_{L^q} \lesssim \| f \|_{L^p}
\end{equation*}
for any $1 \leq p \leq q \leq \infty$ by Young's inequality uniform in $\delta$. This gives no additional assumptions on $p$ and $q$. We estimate the contribution of $\beta_{22}$ by properties of the Bessel kernel (cf. \cite[Theorem~30]{CossettiMandel2020}) 
\begin{equation*}
\| B_\delta (\beta_{22}(D)  f) \|_{L^q(\R^3)} \lesssim \| \beta_{22}(D) f \|_{L^p(\R^3)}
\end{equation*}
for $1 \leq p,q \leq \infty$ and $0 \leq \frac{1}{p} - \frac{1}{q} \leq \frac{\alpha}{3}$ with the endpoints excluded for $\alpha \leq 3$. For $\alpha > 3$ this estimate holds true for $1 \leq p \leq q \leq \infty$. This corresponds to the second assumption on $p$ and $q$ in Theorem \ref{thm:LimitingAbsorptionPrinciple}. Lastly, we give the standard argument for constructing solutions: For $\delta > 0$, consider the approximate solutions $u_\delta \in L^q(\R^3)$
\begin{equation*}
\hat{u}_\delta(\xi) = \frac{\hat{f}(\xi)}{p(\xi) + i \delta}.
\end{equation*}
By the above, we have uniform bounds
\begin{equation*}
\| u_\delta \|_{L^q(\R^3)} \lesssim \| f \|_{L^{p_1}(\R^3) \cap L^{p_2}(\R^3)}.
\end{equation*}
By the Banach--Alaoglu--Bourbaki theorem, we find a weak limit $u_\delta \to u$, which satisfies the same bound. We observe that
\begin{equation*}
P(D) u_\delta = f - i \frac{\delta}{P(D)+i \delta} f.
\end{equation*}
Since
\begin{equation*}
\| \frac{\delta}{P(D)+i \delta} f \|_{L^q} \lesssim \delta \| f \|_{L^{p_1} \cap L^{p_2}},
\end{equation*}
we find that $P(D) u_\delta \to f$ in $L^q(\R^3)$. Since $P(D) u_\delta \to P(D) u$ in $\mathcal{S}'(\R^3)$, this shows that
\begin{equation*}
P(D) u = f
\end{equation*}
in $\mathcal{S}'(\R^3)$. The proof is complete. \hfill $\Box$

\subsection{Spectral theory of Schr\"odinger operators on $\Z^3$}
\label{Subsection:discrete}

As a Fourier multiplier, for $\xi \in \T^3=(\R / (2\pi \Z))^3$, the discrete Laplacian acts as
\begin{align}\label{dispersion relation lattice}
-\frac{1}{2}\widehat{(\Delta u)}(\xi)=p(\xi)\widehat{u}(\xi),\quad p(\xi)=\sum_{j=1}^3[1-\cos(\xi_j)],
\end{align}
where $\widehat{u}$ is the discrete Fourier transform of $u$, 
\begin{align}\label{discrete FT}
\widehat{u}(\xi)=\sum_{x\in \Z^3}
e^{-i x\cdot \xi}u(x),\quad \xi\in \mathbb{T}^3.
\end{align}
Hence $\sigma(H)=\set{p(\xi)}{\xi\in \mathbb{T}^3}=[0,6]$.
The set of critical values of $p$ is 
\begin{align*}
\set{p(\xi)}{\nabla p(\xi)=0}=\{0,2,4,6\}.
\end{align*}
In addition to the critical values, the value $a=3$ plays a special role. This is because the level set $p(\xi)=3$ has a different type of degeneracy (flat umbilic, see \cite{MR2324803}). 

Erd\H{o}s--Salmhofer \cite[Section 4]{MR2324803} proved that $p(\xi)$ satisfies their assumptions 1-4. These assumptions imply that the level sets $\Sigma_a:=\{p(\xi)=a\}$ are in the class $\mathcal{C}$ for $a\notin \{0,2,3,4,6\}$; in fact, their assumption $3$ is not needed for this conclusion. Therefore, our decay bound \eqref{eq: 3/4 decay} strengthens their result in two ways: First, the decay is uniform in all directions and second, it is valid under the relaxed assumptions stated above.


\begin{proof}[Proof of Theorem \ref{theorem resolvent discrete}]
The estimates (2) and (3) are standard consequences of the decay bound \eqref{eq: 3/4 decay}, see e.g.\ \cite[Theorem 1.2 (i), (ii)]{MR4153099}. The spectral measure estimate (1) is an immediate consequence of (2).  
\end{proof}

\begin{proof}[Sketch of the proof of Theorem \ref{wave operators discrete}]
The deterministic result is a standard consequence of the uniform resolvent estimate (using the method of smooth perturbations, see e.g.\ \cite[Corollary to Theorem XII.31]{MR0493421} or \cite[Chapter 4, \textsection
5, Corollary 7]{MR1180965}). For random potentials the result follows from Bourgain's proof in \cite{MR2083389}. One just replaces the Stein-Tomas bound (2.4) there with the extension estimate
\begin{align}\label{extension operator discrete Laplacian}
\|\mathcal{E}g\|_{\ell^{14/3}(\Z^3)}\lesssim \|g\|_{L^2(\Sigma)},
\end{align}
where $\mathcal{E}$ is the Fourier extension operator corresponding to the surface $\Sigma_a$, $a\in J$. This bound is equivalent to the spectral measure estimate in Theorem \ref{theorem resolvent discrete}, by a $TT^*$ argument. We briefly sketch Bourgain's  main arguments. The first observation is that it suffices to prove the claim for small perturbations, i.e.\ for $V$ replaced by $\kappa V$ for some small coupling $\kappa$. The reason is that $V$ can always be split into a short range (compactly supported) part $V_S$ and a long range part $V_L$. The short range part is of finite rank (since we are on a lattice), and one may use the abstract trace class method (i.e.,\ the Kato--Rosenblum theorem, see e.g.\ \cite[Chapter 4, \textsection
2, Theorem 1 and Corollary 2]{MR1180965}) to establish the existence and completeness of the wave operators $W_{\pm}(H_0+V_S,H_0+V)$. Hence, by the chain rule for wave operators (see e.g.\ \cite[Section 2.1]{MR1180965}) it remains to prove that $W_{\pm}(H_0,H_0+V_L)$ exist and are complete. This achieved by the smooth method. Due to the decay assumption, the long range part can be made arbitrarily small, and one could appeal to \cite[Chapter 4, \textsection 6, Theorem 1]{MR1180965}) in the deterministic case. In the random case, Bourgain shows that the Born series for the perturbed resolvent converges with high probability and establishes a limiting absorption principle \cite[Formula (2.5)]{MR1877824}. This already implies that $H$ has only a.c.\ spectrum in $J$. The key tools in the proof of the limiting absorption principle are entropy bounds, which imply estimates on the expectation values of norms of certain 'elementary operators'. Representing the unitary groups $e^{itH}$ and $e^{itH_0}$ in terms of the resolvent, the bounds for the 'elementary operators' also yield the statement about the wave operators. The 'elementary operators' feature in a multilinear expansion of the resolvent and are related to the Fourier restriction operator. In \cite{MR2083389} Bourgain uses the fact that the surfaces $\{p(\xi)=\lambda\}$ (where $p(\xi)=2-\cos(\xi_1)-\cos(\xi_2)$ is the symbol of the discrete Laplacian) are curved away from the edges of the spectrum (see also \cite[Lemma 3.3]{MR1979773}). By the Stein--Tomas theorem, this implies the $L^2(\Sigma)\to \ell^{6}(\Z^2)$ analog of \eqref{extension operator discrete Laplacian}. This bound accounts for the $\ell^3$ norm of the potential in \cite{MR2083389} (see inequality (3.14) there). Note that $1/p-1/p'=2/3$ for $p'=6$, i.e.,\ $3$ is twice the exponent that one would get in the deterministic case in two dimensions. In three dimensions, \eqref{extension operator discrete Laplacian} gives $1/p-1/p'=4/7$ for $p'=14/3$, which explains why the deterministic result holds for $V\in \ell^{7/4}$. The randomization arguments of Bourgain allow us to double the exponent (up to some technical $\epsilon$ loss in the decay, as stated in the assumptions of the theorem). 
\end{proof}

\bibliographystyle{plain}

\begin{thebibliography}{10}

\bibitem{MR3637937}
T.~Alazard, N.~Burq, and C.~Zuily.
\newblock A stationary phase type estimate.
\newblock {\em Proc. Amer. Math. Soc.}, 145(7):2871--2880, 2017.

\bibitem{MR2896292}
V.~I. Arnold, S.~M. Gusein-Zade, and A.~N. Varchenko.
\newblock {\em Singularities of differentiable maps. {V}olume 1}.
\newblock Modern Birkh\"{a}user Classics. Birkh\"{a}user/Springer, New York,
  2012.
\newblock Classification of critical points, caustics and wave fronts,
  Translated from the Russian by Ian Porteous based on a previous translation
  by Mark Reynolds, Reprint of the 1985 edition.

\bibitem{MR2015408}
M.~Ben-Artzi, H.~Koch, and J.-C.~ Saut.
\newblock Dispersion estimates for third order equations in two dimensions.
\newblock {\em Comm. Partial Differential Equations}, 28(11-12):1943--1974,
  2003.

\bibitem{MR2083389}
J.~Bourgain.
\newblock Random lattice {S}chr\"{o}dinger operators with decaying potential:
  some higher dimensional phenomena.
\newblock In {\em Geometric aspects of functional analysis}, volume 1807 of
  {\em Lecture Notes in Math.}, pages 70--98. Springer, Berlin, 2003.

\bibitem{MR1877824}
J.~ Bourgain.
\newblock On random {S}chr\"{o}dinger operators on {$\Bbb Z^2$}.
\newblock {\em Discrete Contin. Dyn. Syst.}, 8(1):1--15, 2002.

\bibitem{MR0494220}
Th.~ Br\"{o}cker.
\newblock {\em Differentiable germs and catastrophes}.
\newblock Cambridge University Press, Cambridge-New York-Melbourne, 1975.
\newblock Translated from the German, last chapter and bibliography by L.
  Lander, London Mathematical Society Lecture Note Series, No. 17.

\bibitem{CossettiMandel2020}
L.~ Cossetti and R.~ Mandel.
\newblock {A limiting absorption principle for Helmholtz systems and
  time-harmonic isotropic Maxwell's equations}.
\newblock {\em arXiv e-prints}, page arXiv:2009.05087, September 2020.

\bibitem{cuenin2020spectral}
J.-C.~ Cuenin.
\newblock From spectral cluster to uniform resolvent estimates on compact
  manifolds, 2020.

\bibitem{MR405513}
J.~J. Duistermaat.
\newblock Oscillatory integrals, {L}agrange immersions and unfolding of
  singularities.
\newblock {\em Comm. Pure Appl. Math.}, 27:207--281, 1974.

\bibitem{MR2333778}
L.~ Erd\H{o}s, M.~ Salmhofer, and H.-T.~ Yau.
\newblock Quantum diffusion for the {A}nderson model in the scaling limit.
\newblock {\em Ann. Henri Poincar\'{e}}, 8(4):621--685, 2007.

\bibitem{MR2324803}
L.~ Erdos and M.~ Salmhofer.
\newblock Decay of the {F}ourier transform of surfaces with vanishing
  curvature.
\newblock {\em Math. Z.}, 257(2):261--294, 2007.

\bibitem{MR0341518}
M.~Golubitsky and V.~Guillemin.
\newblock {\em Stable mappings and their singularities}.
\newblock Springer-Verlag, New York-Heidelberg, 1973.
\newblock Graduate Texts in Mathematics, Vol. 14.

\bibitem{MR620265}
A.~Greenleaf.
\newblock Principal curvature and harmonic analysis.
\newblock {\em Indiana Univ. Math. J.}, 30(4):519--537, 1981.

\bibitem{MR2653054}
I.~A. Ikromov, M.~ Kempe, and D.~ M{\"u}ller.
\newblock Estimates for maximal functions associated with hypersurfaces in
  {$\Bbb R^3$} and related problems of harmonic analysis.
\newblock {\em Acta Math.}, 204(2):151--271, 2010.

\bibitem{MR2775788}
I.~A. Ikromov and D.~ M{\"u}ller.
\newblock On adapted coordinate systems.
\newblock {\em Trans. Amer. Math. Soc.}, 363(6):2821--2848, 2011.

\bibitem{MR2854839}
I~A. Ikromov and D.~ M{\"u}ller.
\newblock Uniform estimates for the {F}ourier transform of surface carried
  measures in {$\Bbb R^3$} and an application to {F}ourier restriction.
\newblock {\em J. Fourier Anal. Appl.}, 17(6):1292--1332, 2011.

\bibitem{MR3524103}
I~A. Ikromov and D.~ M\"{u}ller.
\newblock {\em Fourier restriction for hypersurfaces in three dimensions and
  {N}ewton polyhedra}, volume 194 of {\em Annals of Mathematics Studies}.
\newblock Princeton University Press, Princeton, NJ, 2016.

\bibitem{MR3545933}
E.~ Jeong, Y.~ Kwon, and S.~ Lee.
\newblock Uniform {S}obolev inequalities for second order non-elliptic
  differential operators.
\newblock {\em Adv. Math.}, 302:323--350, 2016.

\bibitem{MR778884}
V.~N. Karpushkin.
\newblock A theorem on uniform estimates for oscillatory integrals with a phase
  depending on two variables.
\newblock {\em Trudy Sem. Petrovsk.}, (10):150--169, 238, 1984.

\bibitem{MR1646048}
M.~Keel and T.~Tao.
\newblock Endpoint {S}trichartz estimates.
\newblock {\em Amer. J. Math.}, 120(5):955--980, 1998.

\bibitem{MR4018760}
Evgeny~L. Korotyaev and Jacob~Schach M\o~ller.
\newblock Weighted estimates for the {L}aplacian on the cubic lattice.
\newblock {\em Ark. Mat.}, 57(2):397--428, 2019.

\bibitem{MR4076079}
Y.~ Kwon and S.~ Lee.
\newblock Sharp resolvent estimates outside of the uniform boundedness range.
\newblock {\em Comm. Math. Phys.}, 374(3):1417--1467, 2020.

\bibitem{MR0155146}
W.~Littman.
\newblock Fourier transforms of surface-carried measures and differentiability
  of surface averages.
\newblock {\em Bull. Amer. Math. Soc.}, 69:766--770, 1963.

\bibitem{MandelSchippa2021}
R.~ Mandel and R.~ Schippa.
\newblock {Time-harmonic solutions for Maxwell's equations in anisotropic media
  and Bochner-Riesz estimates with negative index for non-elliptic surfaces}.
\newblock {\em arXiv e-prints}, page arXiv:2103.17176, March 2021.

\bibitem{OhLee2020}
S..~ {Oh} and S.~ {Lee}.
\newblock {Uniform stationary phase estimate with limited smoothness}.
\newblock {\em arXiv e-prints}, page arXiv:2012.12572, December 2020.

\bibitem{MR4229613}
L.~ Palle.
\newblock Mixed norm {S}trichartz-type estimates for hypersurfaces in three
  dimensions.
\newblock {\em Math. Z.}, 297(3-4):1529--1599, 2021.

\bibitem{MR0493421}
M.~Reed and B.~Simon.
\newblock {\em Methods of modern mathematical physics. {IV}. {A}nalysis of
  operators}.
\newblock Academic Press [Harcourt Brace Jovanovich, Publishers], New
  York-London, 1978.

\bibitem{MR1979773}
W.~Schlag, C.~Shubin, and T.~Wolff.
\newblock Frequency concentration and location lengths for the {A}nderson model
  at small disorders.
\newblock volume~88, pages 173--220. 2002.
\newblock Dedicated to the memory of Tom Wolff.

\bibitem{MR3645429}
C.~D. Sogge.
\newblock {\em Fourier integrals in classical analysis}, volume 210 of {\em
  Cambridge Tracts in Mathematics}.
\newblock Cambridge University Press, Cambridge, second edition, 2017.

\bibitem{MR1232192}
E.~M. Stein.
\newblock {\em Harmonic analysis: real-variable methods, orthogonality, and
  oscillatory integrals}, volume~43 of {\em Princeton Mathematical Series}.
\newblock Princeton University Press, Princeton, NJ, 1993.
\newblock With the assistance of Timothy S. Murphy, Monographs in Harmonic
  Analysis, III.

\bibitem{MR4153099}
K.~ Taira.
\newblock Limiting absorption principle on {$L^p$}-spaces and scattering
  theory.
\newblock {\em J. Math. Phys.}, 61(9):092106, 28, 2020.

\bibitem{Taira2020B}
K.~ Taira.
\newblock {Uniform resolvent estimates for the discrete Schr{\"o}dinger
  operator in dimension three}.
\newblock {\em arXiv e-prints}, page arXiv:2005.09366, May 2020.

\bibitem{Varchenko1976}
A.~N. Varchenko.
\newblock Newton polyhedra and estimation of oscillating integrals.
\newblock {\em Functional Analysis and Its Applications}, 10(3):175--196, 1976.

\bibitem{MR3837109}
R.~ Vershynin.
\newblock {\em High-dimensional probability}, volume~47 of {\em Cambridge
  Series in Statistical and Probabilistic Mathematics}.
\newblock Cambridge University Press, Cambridge, 2018.
\newblock An introduction with applications in data science, With a foreword by
  Sara van de Geer.

\bibitem{MR0073980}
H.~ Whitney.
\newblock On singularities of mappings of euclidean spaces. {I}. {M}appings of
  the plane into the plane.
\newblock {\em Ann. of Math. (2)}, 62:374--410, 1955.

\bibitem{MR1180965}
D.~R. Yafaev.
\newblock {\em Mathematical scattering theory}, volume 105 of {\em Translations
  of Mathematical Monographs}.
\newblock American Mathematical Society, Providence, RI, 1992.
\newblock General theory, Translated from the Russian by J. R. Schulenberger.

\end{thebibliography}

\end{document}